\documentclass{article}

\usepackage{cite}
\usepackage{amssymb}
\usepackage{amsmath}
\usepackage{amsfonts}
\usepackage{amsthm}
\usepackage{enumerate}
\usepackage{colonequals}
\usepackage{mathrsfs} 
\usepackage{color}
\usepackage{enumitem}
\usepackage{mathtools}
\usepackage[all,cmtip]{xy}
\usepackage{tikz-cd}

\usepackage{url}

\usepackage[mathscr]{euscript}

\newtheorem{theorem}{Theorem}[section]

\newtheorem{proposition}{Proposition}[section]

\newtheorem*{theorem*}{Theorem}
\newtheorem{corollary}{Corollary}[section]
\newtheorem{definition}{Definition}[section]

\newtheorem{conjecture}{Conjecture}[section]

\numberwithin{equation}{section}

\addtolength{\oddsidemargin}{-.875in}
	\addtolength{\evensidemargin}{-.875in}
	\addtolength{\textwidth}{1.75in}

\allowdisplaybreaks[2]

\newcommand{\tril}{\triangleleft}
\newcommand{\trir}{\triangleright}

\newcommand{\deee}{\hspace{2 pt} \mathrm{d}}
\newcommand{\rest}{\upharpoonright}

\newcommand{\ov}[1]{\overline{#1}}

\newcommand{\nml}{\left \vert \left \vert}
\newcommand{\nmr}{\right \vert \right \vert}

\renewcommand{\hat}{\widehat}
\renewcommand{\tilde}{\widetilde}

\allowdisplaybreaks

\setlength\parindent{0pt}

\begin{document}

\title{Extension of positive definite functions and Connes' embedding conjecture }
\author{Peter Burton and Kate Juschenko}

\maketitle

\begin{abstract}
In this paper we formulate a conjecture which is a strengthening of an extension theorem of Bakonyi and Timotin for positive definite functions on the free group on two generators. We prove that this conjecture implies Connes' embedding conjecture. We prove a weak case of this extension conjecture.

\end{abstract}

\tableofcontents

\section{Introduction} \label{part.intro}

\subsection{Connes' embedding conjecture}

\subsubsection{Statement of the conjecture}

If $A$ and $B$ are $C^\ast$-algebras, we will write $A \otimes_{\mathrm{max}} B$ for the maximal tensor product and $A \otimes_{\mathrm{min}} B$ for the minimal tensor product. For information about tensor products of operator algebras we refer the reader to Chapter 11 of \cite{MR1468230}. If $G$ is a countable discrete group, we will write $C^\ast(G)$ for the full group $C^*$-algebra of $G$. For information about group $C^\ast$-algebras we refer the reader to Chapter VII of \cite{MR1402012}. Let $\mathbb{F}$ be the free group on two generators. We now state Connes' embedding conjecture. 

\begin{conjecture}[Connes' embedding conjecture]  \label{thm.CEC} We have \[ C^\ast(\mathbb{F}) \otimes_{\mathrm{max}} C^\ast(\mathbb{F}) = C^\ast(\mathbb{F}) \otimes_{\mathrm{min}} C^\ast(\mathbb{F}) \] \end{conjecture}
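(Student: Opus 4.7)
The plan is to attack Conjecture \ref{thm.CEC} indirectly, via the well-known reformulation which asserts that CEC is equivalent to every tracial state on $C^\ast(\mathbb{F})$ being a weak-$\ast$ limit of traces arising from finite-dimensional unitary representations of $\mathbb{F}$. Equivalently, every positive definite function on $\mathbb{F}$ of the form $g \mapsto \tau(u_g)$, with $\tau$ a tracial state on $C^\ast(\mathbb{F})$, should be a pointwise limit of matrix-coefficient functions $g \mapsto \operatorname{tr}(U_g)$, where $U : \mathbb{F} \to \mathbb{U}(n)$ is a finite-dimensional unitary representation and $\operatorname{tr}$ denotes the normalized trace. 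This converts the tensor-product identity into a concrete approximation problem for positive definite functions on $\mathbb{F}$.

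To attack this approximation problem I would proceed through a finite extension principle. Fix a tracial positive definite function $\varphi$ on $\mathbb{F}$ and a finite symmetric subset $S \subset \mathbb{F}$. The restriction $\varphi\rest S$ is a partial positive definite function on $S$. The theorem of Bakonyi and Timotin already guarantees that such partial positive definite data on suitably structured subsets of $\mathbb{F}$ extend to a positive definite function on all of $\mathbb{F}$. The key additional step is to \emph{strengthen} this: one needs an extension of $\varphi\rest S$ which is not merely positive definite on $\mathbb{F}$, but which in addition lies in the closed convex hull of the normalized matricial characters. This is the extension conjecture hinted at in the abstract. Given such a matricially approximable extension for every finite $S$, a standard diagonal argument over an exhaustion of $\mathbb{F}$ by finite sets produces matricial approximations of $\varphi$ itself, and hence CEC.

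The hard part is proving the strengthened extension statement. The Bakonyi-Timotin argument proceeds by a Hilbert-space completion on a chord-pattern in the Cayley graph, and is agnostic to whether the resulting extension factors through a finite-dimensional representation. Controlling the representation dimension --- or equivalently, producing matricial microstates compatible with prescribed values on $S$ --- appears to require a new non-commutative moment input or a quantitative rounding procedure from a GNS representation into a matrix algebra. A natural first test case is when $S$ is small, for instance a ball of radius one or two in the word metric, where explicit finite-dimensional extensions can plausibly be constructed by hand; this is presumably the setting in which the authors announce a weak case of the extension conjecture.
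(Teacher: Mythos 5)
Your plan departs from the paper's at its root and, more importantly, does not actually achieve a reduction. You pass to the familiar tracial reformulation (every tracial state on $C^\ast(\mathbb{F})$ is a weak-$\ast$ limit of normalized traces of finite-dimensional unitary representations) and then conjecture that any finite partial positive definite datum $\varphi \rest S$ extends to a positive definite function on $\mathbb{F}$ lying in the closed convex hull of matricial characters. But that conjectured extension property is \emph{equivalent} to CEC, not a weakening of it: one direction is the diagonal argument you describe, and the other is immediate (if CEC holds, take the extension to be $\varphi$ itself). So the ``strengthened Bakonyi--Timotin'' you propose is CEC in disguise, and the sentence ``a quantitative rounding procedure from a GNS representation into a matrix algebra'' is naming the entire difficulty rather than isolating a new, potentially tractable, piece. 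By contrast, the paper's Conjecture~\ref{lem.nogain} makes \emph{no} finite-dimensionality demand on the extension at all: it asks only that finitely many elements of $\mathrm{NSPD}(r,d)$ admit simultaneous strictly positive definite extensions to all of $\mathbb{F}$ preserving the pairwise relative energies $\mathfrak{e}(\cdot,\cdot)$. That is a pure extension statement in the spirit of Bakonyi--Timotin, and it is not obviously equivalent to CEC, which is what makes the reduction meaningful.

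The other structural difference worth noting is where the finiteness comes from. You place the entire burden of finite-dimensionality on the extension step. The paper avoids this: working with the tensor-product formulation directly, it considers a commuting pair $\rho = \rho_\tril \times \rho_\trir$ and uses Voiculescu's theorem (Theorem~\ref{thm.voi}) together with the profinite completion $\ov{\mathbb{F}}$ to approximately conjugate $\rho_\tril$ to the Koopman representation of the left translation action on $\ov{\mathbb{F}}$. This furnishes a finite quotient $\Gamma$ of $\mathbb{F}_\tril$ and an orthonormal system $x_1,\ldots,x_d$ permuted by $\Gamma$ essentially for free on one factor; the extension conjecture is then used only to extend the $\mathrm{Sym}(d)$-indexed family $(\mathsf{D}_\varsigma)$ to all of $\mathbb{F}_\trir$ while keeping the transport operators bounded, and $\theta \times \zeta_\trir$ is made unitary by averaging. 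This is precisely why Conjecture~\ref{thm.half} is about ``half finite'' rather than ``totally finite'' representations, and it is the mechanism that lets the extension conjecture stay free of any finite-dimensional requirement. If you want to salvage your route, you would need to replace ``extension into the convex hull of matricial characters'' with some condition that is not already equivalent to the theorem you are trying to prove.
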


The main topic of this paper is a theorem that Conjecture \ref{lem.nogain} in Subsection \ref{sec.ilhanomar} below implies Conjecture \ref{thm.CEC}.\\
\\
Let $\mathbb{F}_\infty$ be the free group on a countably infinite set of generators. It is well known that $\mathbb{F}_\infty$ embeds as a subgroup of $\mathbb{F}$. Given an identification of $\mathbb{F}_\infty$ with a subgroup of $\mathbb{F}$, we obtain an embedding of $C^\ast(\mathbb{F}_\infty)$ into $C^\ast(\mathbb{F})$. This embedding gives rise to a commutative diagram 

\[\begin{tikzcd}        
    C^\ast(\mathbb{F}_\infty) \otimes_{\mathrm{max}} C^\ast(\mathbb{F}_\infty) \arrow{r} \arrow[hook]{d} & C^\ast(\mathbb{F}_\infty) \otimes_{\mathrm{min}} C^\ast(\mathbb{F}_\infty) \arrow[hook]{d} \\ C^\ast(\mathbb{F}) \otimes_{\mathrm{max}} C^\ast(\mathbb{F}) \arrow{r}& C^\ast(\mathbb{F}) \otimes_{\mathrm{min}} C^\ast(\mathbb{F}) \end{tikzcd} \] where all the arrows represent $\ast$-homomorphisms, the vertical arrows are embeddings and Conjecture \ref{thm.CEC} guarantees the bottom arrow is an isomorphism. Thus Conjecture \ref{thm.CEC} implies the following, which is more commonly seen in the literature on Connes' embedding conjecture.
    
\begin{corollary}[Connes' embedding conjecture, $\mathbb{F}_\infty$ version] We have \[ C^\ast(\mathbb{F}_\infty) \otimes_{\mathrm{max}} C^\ast(\mathbb{F}_\infty) = C^\ast(\mathbb{F}_\infty) \otimes_{\mathrm{min}} C^\ast(\mathbb{F}_\infty) \]  \end{corollary}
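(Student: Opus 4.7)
The strategy is a straightforward diagram chase that exploits all three properties listed by the authors: the vertical maps are injective, the bottom horizontal map is an isomorphism (this is exactly Conjecture \ref{thm.CEC}), and the diagram commutes. Since the canonical surjection from the maximal tensor product onto the minimal tensor product is always surjective for any $C^*$-algebras, the only content of the corollary is to show that the top horizontal arrow is injective.

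Write $\pi_\infty$ and $\pi_{\mathbb{F}}$ for the top and bottom horizontal arrows, respectively, and $\iota_{\mathrm{max}}$ and $\iota_{\mathrm{min}}$ for the left and right vertical arrows. Suppose $x \in C^\ast(\mathbb{F}_\infty) \otimes_{\mathrm{max}} C^\ast(\mathbb{F}_\infty)$ satisfies $\pi_\infty(x) = 0$. Applying $\iota_{\mathrm{min}}$ and using commutativity of the diagram gives
\[ \pi_{\mathbb{F}}(\iota_{\mathrm{max}}(x)) = \iota_{\mathrm{min}}(\pi_\infty(x)) = 0. \]
Since $\pi_{\mathbb{F}}$ is injective by Conjecture \ref{thm.CEC}, we conclude that $\iota_{\mathrm{max}}(x) = 0$, and since $\iota_{\mathrm{max}}$ is an embedding we conclude that $x = 0$. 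Combined with surjectivity of $\pi_\infty$, this gives that $\pi_\infty$ is an isomorphism, which is the statement of the corollary.

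There is no genuine obstacle: the entire argument is a categorical manipulation whose only non-formal input is the assertion in the preceding paragraph that the vertical maps are injective. In principle, the one place where a reader might want more detail is the justification that $\iota_{\mathrm{max}}$ is indeed an embedding; inclusions of subgroups always yield injective $\ast$-homomorphisms at the level of full group $C^\ast$-algebras, and for the maximal tensor product this functoriality passes to inclusions in the presence of a conditional expectation $C^\ast(\mathbb{F}) \to C^\ast(\mathbb{F}_\infty)$, which in this setting can be constructed from the natural retraction. Once those standard facts are invoked, the corollary follows immediately from the diagram chase above.
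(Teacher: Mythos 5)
Your diagram chase is essentially identical to the paper's argument: the paper deduces the corollary from the same commutative square, asserting that the vertical arrows are embeddings and that Conjecture \ref{thm.CEC} makes the bottom arrow an isomorphism, and your injectivity-plus-surjectivity chase is exactly what is implicit there. The one inaccuracy is in your closing aside: $\mathbb{F}_\infty$ is not a group retract of $\mathbb{F}$ (a retract of $\mathbb{F}$ would be free of rank at most two), so the conditional expectation $C^\ast(\mathbb{F}) \to C^\ast(\mathbb{F}_\infty)$ that yields injectivity of the left vertical (max-tensor) arrow should instead be the standard one attached to any subgroup inclusion $H \leq G$, namely the unital completely positive extension of the map sending $g \mapsto g$ for $g \in H$ and $g \mapsto 0$ otherwise; with that substitution your justification is correct and in fact more detailed than the paper, which states the embedding property without proof.
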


\subsection{Half finite approximation conjecture} 

\subsubsection{Statement of the conjecture}

We always assume Hilbert spaces are separable with complex scalars. Given a Hilbert space $\mathscr{X}$, let $\mathrm{GL}(\mathscr{X})$ be the group of bounded linear operators on $\mathscr{X}$ with bounded inverses and let $\mathrm{U}(\mathscr{X})$ be the group of unitary operators on $\mathscr{X}$. 

\begin{definition} Let $\mathscr{X}$ be a Hilbert space. We define a linear representation $\zeta:\mathbb{F} \times \mathbb{F} \to \mathrm{GL}(\mathscr{X})$ to be \textbf{half finite} if there exist a finite quotient $\Gamma$ of $\mathbb{F}$ and a linear representation $\zeta_\bullet: \Gamma \times \mathbb{F} \to \mathrm{GL}(\mathscr{X})$ such that $\zeta$ factors as $\zeta_\bullet$ precomposed with $\Pi \times \iota$, where $\Pi:\mathbb{F} \twoheadrightarrow \Gamma$ is the quotient map and $\iota$ is the identity map on $\mathbb{F}$. We define $\zeta$ to be \textbf{totally finite} if there exists a finite quotient $\Lambda$ of $\mathbb{F} \times \mathbb{F}$ and a linear representation $\zeta_\circ:\Lambda \to \mathrm{GL}(\mathscr{X})$ such that $\zeta$ factors as $\zeta_\circ$ precomposed with the quotient map from $\mathbb{F} \times \mathbb{F}$ to $\Lambda$. \end{definition}

Conjecture \ref{thm.CEC} will be an easy consequence of the following.

\begin{conjecture}[Existence of half finite approximations]  \label{thm.half} Let $\mathscr{X}$ be a Hilbert space, let $\rho:\mathbb{F} \times \mathbb{F} \to \mathrm{U}(\mathscr{X})$ be a unitary representation and let $x \in \mathscr{X}$ be a unit vector. Let $E$ and $F$ be finite subsets of $\mathbb{F}$ and let $\epsilon > 0$. Then there exist a Hilbert space $\mathscr{Y}$, a finite quotient $\Gamma$ of $\mathbb{F}$, a half finite unitary representation $\zeta:\mathbb{F} \times \mathbb{F} \to \Gamma \times \mathbb{F} \to \mathrm{U}(\mathscr{Y})$ and a unit vector $y \in \mathscr{Y}$ such that \[ | \langle \rho(g,g') x, x \rangle - \langle \zeta(g,g') y, y \rangle | \leq \epsilon \] for all $g \in E$ and $g' \in F$. \end{conjecture}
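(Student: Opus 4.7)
The plan is to translate the statement into the language of positive definite functions, reduce to an extension problem on a finite subset, and then appeal to a strengthened version of the Bakonyi--Timotin extension theorem (the paper's Conjecture \ref{lem.nogain}).

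First, observe that $\phi(g,g') := \langle \rho(g,g')x, x \rangle$ is a normalized positive definite function on $\mathbb{F} \times \mathbb{F}$, and that any normalized positive definite function $\psi$ on $\mathbb{F} \times \mathbb{F}$ which factors through $\Gamma \times \mathbb{F}$ for some finite quotient $\Gamma$ of $\mathbb{F}$ gives rise, via the GNS construction, to a half finite unitary representation $\zeta$ together with a cyclic unit vector $y$ satisfying $\langle \zeta(\cdot,\cdot) y, y \rangle = \psi$. Hence it suffices to produce such a $\psi$ that $\epsilon$-approximates $\phi$ pointwise on the finite set $E \times F$.

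To produce $\psi$, I would begin by invoking residual finiteness of $\mathbb{F}$ to find a finite index normal subgroup $N \triangleleft \mathbb{F}$ such that the quotient map $\Pi: \mathbb{F} \twoheadrightarrow \Gamma := \mathbb{F}/N$ is injective on a sufficiently large finite saturation of $E$ (one containing all products $g^{-1}g'$ with $g,g' \in E$, so that positive definiteness behaves well under the projection). The restriction of $\phi$ to $E \times F$ then descends unambiguously to a partial function on the image of $E \times F$ inside $\Gamma \times \mathbb{F}$, and this partial function inherits positive definiteness on its finite domain. The task reduces to extending this partial positive definite function to a genuine positive definite function $\widetilde{\psi}$ on all of $\Gamma \times \mathbb{F}$, since the pullback $\psi := \widetilde{\psi} \circ (\Pi \times \iota)$ would then be the desired half finite approximation of $\phi$.

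Carrying out this extension is the main obstacle. The classical Bakonyi--Timotin theorem extends partial positive definite functions on tree-like subsets of $\mathbb{F}$ to all of $\mathbb{F}$, but does not produce extensions that are compatible with a prescribed finite quotient on the first coordinate. Conjecture \ref{lem.nogain} is formulated precisely to supply an extension theorem of this stronger form, and granting it---possibly at the cost of the $\epsilon$-perturbation built into the conclusion---the existence of $\widetilde{\psi}$ and hence of the desired half finite representation follows. The delicate point will be maintaining positivity of the Gram matrices arising at each stage of an inductive extension while simultaneously respecting the quotient $\mathbb{F} \to \Gamma$ on the first coordinate; an unrestricted Bakonyi--Timotin extension can easily take incompatible values on different $N$-cosets, so any proof must add real content beyond the classical theorem, which is why the statement is posed as a conjecture rather than a theorem.
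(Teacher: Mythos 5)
There is a genuine gap, and it sits exactly at the two places you flag as delicate. First, your claim that the restriction of $\phi$ to $E\times F$ ``descends unambiguously'' to a \emph{positive definite} partial function on $\Gamma\times\mathbb{F}$ is unjustified, and injectivity of $\Pi$ on a large saturation of $E$ does not repair it. Positive definiteness of a partial function on $\Gamma\times\mathbb{F}$ requires the fundamental inequality for every finite set $S\subseteq\Gamma\times\mathbb{F}$ with $S^{-1}S$ inside the domain, and because $\Gamma$ is finite there are ``wrap--around'' configurations $\{(\gamma_i,f_i)\}$ whose pairwise ratios lie in $\Pi(E^{-1}E)\times F^{-1}F$ but which admit no lift to a configuration in $\mathbb{F}\times\mathbb{F}$ with ratios in $E^{-1}E\times F^{-1}F$; for such $S$ there are no vectors $\rho(g_i,f_i)x$ realizing the matrix as a Gram matrix, so the required positivity can fail. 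Second, even granting that reduction, Conjecture \ref{lem.nogain} is not an extension theorem ``compatible with a prescribed finite quotient on the first coordinate'': it concerns finitely many matrix-valued strictly positive definite functions on a ball $\mathbb{B}_r$ in a single copy of $\mathbb{F}$ and asserts extensions to all of $\mathbb{F}$ preserving the pairwise relative energies $\mathfrak{e}(\cdot,\cdot)$ of Definition \ref{def.transportop}. Your proposal never explains how to derive the $\Gamma\times\mathbb{F}$ extension you need from that statement, so the final step is an appeal to a theorem that has not been shown to exist.

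For comparison, the paper's argument supplies precisely the structure your reduction is missing before any quotient is taken. One first assumes $\rho$ maximal and uses Voiculescu's theorem (Proposition \ref{prop.uac}) to approximately unitarily conjugate $\rho_\tril$ to the Koopman representation of the profinite completion; this produces an orthonormal family $x_1,\dots,x_d$, indexed by a finite quotient and \emph{exactly} permuted by an auxiliary representation $\kappa$ close to $\rho_\tril$, with $x$ close to their span. The finite quotient thus enters through the vectors, not by pushing $\phi$ forward. One then forms the matrix-valued positive definite functions $\mathsf{D}_\varsigma$ on $\mathbb{B}_{2r}$ of the \emph{right} factor only, applies Conjecture \ref{lem.nogain} to extend them to all of $\mathbb{F}_\trir$ with no gain in relative energy, and uses the transport operators between the extensions to define an action of $\mathbb{F}_\tril$ factoring through the finite permutation group, commuting with the direct sum of the GNS representations of the extensions. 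Finally this almost-unitary left action is unitarized by conjugating with $q^{1/2}$ for the averaged operator $q$, with the error controlled via the holomorphic functional calculus, and a witness vector is checked. The energy-preservation clause of Conjecture \ref{lem.nogain} is what keeps the transport operators (and hence the repaired left representation) under control; it is not a statement about extending partial functions on $\Gamma\times\mathbb{F}$, so your route would need substantial new ideas at both of the points above.
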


\subsubsection{Totally finite approximations}

Conjecture \ref{thm.half} asserts that the half finite representations are dense in the unitary dual of $\mathbb{F} \times \mathbb{F}$. In \cite{MR2077037} it is proved that the representations factoring through finite quotients are dense in the unitary dual of $\mathbb{F}$. Since the tensor product of two such representations is totally finite, given Conjecture \ref{thm.CEC} we can see that the totally finite representations are dense in the unitary dual of $\mathbb{F} \times \mathbb{F}$. In other words, Conjecture \ref{thm.half} implies the same statement with half finite replaced by totally finite.

\subsubsection{Deducing Connes' embedding conjecture from half finite approximation conjecture} \label{subsec.diablo}

In this subsection we will show how Conjecture \ref{thm.half} implies Conjecture \ref{thm.CEC}. Assume that Conjecture \ref{thm.half} is true. Write $||\cdot||_{\mathrm{max}}$ for the norm on $C^\ast(\mathbb{F}) \otimes_{\mathrm{max}} C^\ast(\mathbb{F})$ and $||\cdot||_{\mathrm{min}}$ for the norm on $C^\ast(\mathbb{F}) \otimes_{\mathrm{min}} C^\ast( \mathbb{F})$. Fix an element $\phi$ in the group ring $ \mathbb{C}[\mathbb{F} \times \mathbb{F}]$ such that $||\phi||_{\mathrm{max}} = 1$. In order to prove Conjecture \ref{thm.CEC} suffices to show that $||\phi||_{\mathrm{min}} = 1$. To this end, let $\sigma > 0$.  \\
\\
Since $||\phi||_{\mathrm{max}} = 1$ we can find a Hilbert space $\mathscr{X}$, a unitary representation $\rho: \mathbb{F} \times \mathbb{F} \to \mathrm{U}(\mathscr{X})$ and a unit vector $x \in \mathscr{X}$ such that $||\rho(\phi) x||^2 \geq 1- \sigma$. Write \[ \phi = \sum_{g \in E}\sum_{g' \in F} \alpha_{g,g'}(g,g') \] for finite sets $E,F \subseteq \mathbb{F}$  and complex numbers $(\alpha_{g,g'})_{g \in E,g' \in F}$. Let $\epsilon > 0$ be such that \[ \epsilon \left(\sum_{g \in E} \sum_{g' \in F} |\alpha_{g,g'}| \right)^2 \leq \sigma \] Apply Conjecture \ref{thm.half} to these parameters with $E$ replaced by $E^{-1}E$ and $F$ replaced by $F^{-1}F$. We obtain $\mathscr{Y},\Gamma,\zeta$ and $y$. We have \begin{align*} |  \langle \rho(\phi) x, \rho(\phi) x \rangle &- \langle \zeta(\phi) y, \zeta(\phi) y\rangle | \\ & = \Biggl \vert \left \langle \left( \sum_{g \in E}\sum_{g' \in F} \alpha_{g,g'} \rho(g,g') \right) x, \left( \sum_{h \in E}\sum_{h' \in F} \alpha_{h,h'} \rho(h,h')\right) x \right \rangle \\ & \hspace{1 in} - \left \langle \left( \sum_{g \in E}\sum_{g' \in F} \alpha_{g,g'} \zeta(g,g') \right) y,\left(\sum_{h \in E}\sum_{h' \in F} \alpha_{h,h'} \zeta(h,h')\right) y \right \rangle \Biggr \vert \\ & = \Biggl \vert \sum_{g,h \in E} \sum_{g',h' \in F} \alpha_{g,g'}\ov{\alpha_{h,h'}}  \langle \rho(g,g')x,\rho(h,h') x\ \rangle \\ & \hspace{1 in} - \sum_{g,h \in E} \sum_{g',h' \in F} \alpha_{g,g'}\ov{\alpha_{h,h'}} \bigl \langle \zeta(g,g') y,\zeta(h,h')y \bigr \rangle \Biggr \vert \\ & = \Biggl \vert \sum_{g,h \in E} \sum_{g',h' \in F} \alpha_{g,g'}\ov{\alpha_{h,h'}} \bigl \langle \rho\bigl(h^{-1}g,(h')^{-1}g'\bigr) x, x\bigr \rangle \\ & \hspace{1 in} - \sum_{g,h \in E} \sum_{g',h' \in F} \alpha_{g,g'}\ov{\alpha_{h,h'}} \bigl \langle \zeta\bigl(h^{-1}g,(h')^{-1}g'\bigr) y, y \bigr \rangle \Biggr \vert \\ & \leq \sum_{g,h \in E} \sum_{g',h' \in F} |\alpha_{g,g'}||\alpha_{h,h'}| \bigl \vert \bigl \langle \rho\bigl(h^{-1}g,(h')^{-1}g'\bigr) x, x \bigr \rangle - \bigl \langle \zeta\bigl(h^{-1}g,(h')^{-1}g'\bigr) y, y \bigr \rangle \bigr \vert \\ & \leq \epsilon \sum_{g,h \in E} \sum_{g',h' \in F} |\alpha_{g,g'}||\alpha_{h,h'}| \\   & = \epsilon \left( \sum_{g \in E} \sum_{g' \in F} |\alpha_{g,g'}| \right)^2 \leq \sigma \end{align*}

Therefore we obtain $||\zeta(\phi) y||^2 \geq ||\rho(\phi)x||^2 - \sigma$ so that $||\zeta(\phi)||^2_{\mathrm{op}} \geq 1- 2 \sigma$. \\
\\
There is a natural commutative diagram \[     \xymatrix{ C^\ast(\mathbb{F}) \otimes_{\mathrm{max}} C^\ast(\mathbb{F})  \ar[r] \ar[d] & C^\ast(\mathbb{F}) \otimes_{\mathrm{min}} C^\ast( \mathbb{F}) \ar[d] \\ C^\ast(\Gamma) \otimes_{\mathrm{max}} C^\ast(\mathbb{F}) \ar[r] & C^\ast(\Gamma) \otimes_{\mathrm{min}} C^\ast(\mathbb{F})  } \]  where all the arrows represent surjective $\ast$-homomorphisms. Moreover, there are canonical copies of $\phi$ in each of the above algebras. Since $\zeta$ factors through $\Gamma \times \mathbb{F}$, we see that the norm of $\phi$ in the bottom left corner is at least $\sqrt{1-2\sigma}$. Since $C^\ast(\Gamma)$ is finite dimensional, Lemma 11.3.11 in \cite{MR1468230} implies the arrow across the bottom of the above diagram is an isomorphism. It follows that the norm of $\phi$ in the bottom right corner is at least $\sqrt{1-2\sigma}$ and so $||\phi||_{\mathrm{min}} \geq \sqrt{1-2\sigma}$. Since $\sigma > 0$ was arbitrary we obtain $||\phi||_{\mathrm{min}} = 1$ as required. Therefore in order to prove Conjecture \ref{thm.CEC} it suffices to prove Conjecture \ref{thm.half}.

\subsection{Notation}

\subsubsection{The free group}

Fix a pair of free generators $a$ and $b$ for $\mathbb{F}$ and endow $\mathbb{F}$ with the standard Cayley graph structure corresponding to left multiplication by these generators. If $\Gamma$ is a quotient of $\mathbb{F}$ we identify $a$ and $b$ with their images in $\Gamma$. We write $e$ for the identity of $\mathbb{F}$. We will also use the symbol $e$ for $2.718\ldots$ \\
\\
We consider the word length associated to $a$ and $b$, which we denote by $|\cdot|$. For $r \in \mathbb{N}$ let $\mathbb{B}_r = \{g \in \mathbb{F}:|g| \leq r\}$ be the ball of radius $r$ around $e$. Write $K_r$ for the cardinality of $\mathbb{B}_r$.\\
\\
We define an ordering $\preceq$ on the sphere of radius $1$ in $\mathbb{F}$ by setting $a \preceq b \preceq a^{-1} \preceq b^{-1}$. From this we obtain a corresponding shortlex linear ordering on all of $\mathbb{F}$, which we continue to denote by $\preceq$. For $g \in \mathbb{F}$ define $\mathcal{I}_g = \bigcup \{\{h,h^{-1}\}: h \preceq g\}$. Define a generalized Cayley graph $\mathrm{Cay}(\mathbb{F},g)$ with vertex set equal to $\mathbb{F}$ by placing an edge between distinct elements $h$ and $\ell$ if and only if $\ell^{-1}h \in \mathcal{I}_g$. Write $g_\uparrow$ for the immediate predecessor of $g$ in $\preceq$ and $g_\downarrow$ for the immediate successor of $g$ in $\preceq$. 

\subsubsection{Miscellanea}

If $z$ and $w$ are complex numbers and $\epsilon > 0$ we will sometimes write $z \approx\!\![\epsilon] w$ to mean $|z-w| \leq \epsilon$.\\
\\
We write $\mathbb{D}$ for the open unit disk in the complex plane.\\
\\
If $n \in \mathbb{N}$ we write $[n]$ for $\{1,\ldots,n\}$.

\subsection{Acknowledgements}

We thank Lewis Bowen for several suggestions that improved the writing. We thank Rostyslav Kravchenko for numerous discussions.

\section{Harmonic analysis on the free group} \label{sec.harmonic}

\subsection{The fundamental inequality on the free group}

\begin{definition} Let $d \in \mathbb{N}$ and let $F$ be a finite subset of $\mathbb{F}$. We define a function $\mathsf{C}:F \to \mathrm{Mat}_{d \times d}(\mathbb{C})$ to be \textbf{positive definite} if we have the fundamental inequality \begin{equation} \label{eq.posdef} \sum_{g,h \in E} \alpha(h)^\ast \mathsf{C}(h^{-1}g)\alpha(g) \geq 0 \end{equation} for every subset $E$ of $\mathbb{F}$ with $E^{-1}E \subseteq F$ and every function $\alpha:E \to \mathbb{C}^d$.\\
\\
We define $\mathsf{C}$ to be \textbf{strictly positive definite} if $\mathsf{C}$ is positive definite and the inequality in (\ref{eq.posdef}) is saturated only when $\alpha$ is identically $0$. We define a function $\mathsf{C}:\mathbb{F} \to \mathrm{Mat}_{d \times d}(\mathbb{C})$ to be (strictly) positive definite if $\mathsf{C} \rest F$ is (strictly) positive definite for every finite $F \subseteq \mathbb{F}$. \end{definition}

A positive definite function on the free group can be thought of as a noncommutative analog of an infinite positive definite Toeplitz matrix.

\subsection{The space of normalized strictly positive definite functions}

We will always assume the following normalization condition.

\begin{definition} Let $\mathbf{I}_d$ denote the $d \times d$ identity matrix. If $\mathsf{C}$ is a positive definite function with values in $\mathrm{Mat}_{d \times d}(\mathbb{C})$ whose domain contains $e$, we define $\mathsf{C}$ to be \textbf{normalized} if $\mathsf{C}(e) = \mathbf{I}_d$. \end{definition}

If $\mathsf{C}$ is normalized then for any fixed $g \in \mathbb{F}$ the vectors $\Phi_\mathsf{C}(g)_1,\ldots,\Phi_\mathsf{C}(g)_d$ are orthonormal. We denote the space of normalized strictly positive definite functions $\mathsf{C}:\mathbb{B}_r \to \mathrm{Mat}_{d \times d}(\mathbb{C})$ by $\mathrm{NSPD}(r,d)$. We endow the space of functions from $\mathbb{B}_r$ to $\mathrm{Mat}_{d \times d}(\mathbb{C})$ with the norm \[ ||\mathsf{C}||_1 = \sum_{g \in \mathbb{B}_r} \sum_{j,k=1}^d |\mathsf{C}(g)_{j,k}| \]

\subsection{Realizations of positive definite functions on balls}

Note that $\mathbb{B}_r^{-1}\mathbb{B}_r = \mathbb{B}_{2r}$. Therefore if $\mathsf{C} \in \mathrm{NSPD}(2r,d)$ we can regard it as a positive definite kernel on the set $\mathbb{B}_r \times [d]$. By Theorem C.2.3 in \cite{MR2415834} there exists a Hilbert space $\mathscr{X}(\mathsf{C})$ and a function $\Phi_\mathsf{C}: \mathbb{B}_r \to \mathscr{X}(\mathsf{C})^d$ such that \begin{equation} \label{eq.mari-0} \langle \Phi_\mathsf{C}(g)_j,\Phi_\mathsf{C}(h)_k \rangle = \mathsf{C}(h^{-1}g)_{j,k} \end{equation} for all $g,h \in \mathbb{B}_r$ and all $j,k \in [d]$. Moreover, we may and will assume that the coordinates of the range of $\Phi_\mathsf{C}$ span $\mathscr{X}(\mathsf{C})$. The hypothesis that $\mathsf{C}$ is strictly positive definite ensures that the coordinates of the range of $\Phi_\mathsf{C}$ will be linearly independent. We will refer to them as the canonical basis for $\mathscr{X}(\mathsf{C})$.

\begin{definition} We say that $(\mathscr{X}(\mathsf{C}),\Phi_\mathsf{C})$ as above is a \textbf{realization} of $\mathsf{C}$. \end{definition}

We can construct a realization of a positive definite function $\mathsf{C}:\mathbb{F} \to \mathrm{Mat}_{d \times d}(\mathbb{C})$ in the same way, obtaining a Hilbert space $\mathscr{X}(\mathsf{C})$ and a function $\Phi_\mathsf{C}:\mathbb{F} \to \mathscr{X}(\mathsf{C})^d$ such that the span of the coordinates of the range of $\Phi_\mathsf{C}$ is dense in $\mathscr{X}(\mathsf{C})$.\\
\\
It is clear that given two realizations of the same positive definite function there exists a natural unitary isomorphism from one realization Hilbert space to the other. This isomorphism transforms a canonical basis vector in one realization to the canonical basis vector in another realization having the same index. If $\mathsf{C}:\mathbb{F} \to \mathrm{Mat}_{d \times d}(\mathbb{C})$ is positive definite then the function $g \mapsto (\Phi_\mathsf{C}(hg)_1,\ldots,\Phi_\mathsf{C}(hg)_d)$ is a realization of $\Phi_\mathsf{C}$ for any $h \in \mathbb{F}$. Thus we may make the following definition.

\begin{definition} Let $d \in \mathbb{N}$ and let $\mathsf{C}:\mathbb{F} \to \mathrm{Mat}_{d \times d}(\mathbb{C})$ be positive definite. Then any realization of $\mathsf{C}$ defines an \textbf{associated unitary representation} of $\mathbb{F}$ on $\mathscr{X}(\mathsf{C})$ denoted by $\rho_\mathsf{C}$ and given by the translation $\rho_\mathsf{C}(h)  \Phi_\mathsf{C}(g)_j = \Phi_\mathsf{C}(hg)_j$ for $g,h \in \mathbb{F}$ and $j \in [d]$. \end{definition}

\subsection{Transport operators and relative energies}

\begin{definition} Let $\mathsf{C},\mathsf{D} \in \mathrm{NSPD}(2r,d)$. Let $(\mathscr{X}(\mathsf{C}),\Phi_\mathsf{C})$ and $(\mathscr{X}(\mathsf{D}),\Phi_\mathsf{D})$ be realizations of $\mathsf{C}$ and $\mathsf{D}$ respectively. Define the \textbf{transport operator} $t[\mathsf{C},\mathsf{D}]:\mathscr{X}(\mathsf{C}) \to \mathscr{X}(\mathsf{D})$ by setting \[ t[\mathsf{C},\mathsf{D}]  \sum_{g \in \mathbb{B}_r} \sum_{j=1}^d \alpha(g)_j \Phi_\mathsf{C}(g)_j = \sum_{g \in \mathbb{B}_r} \sum_{j=1}^d \alpha(g)_j \Phi_\mathsf{D}(g)_j \] for functions $\alpha:\mathbb{B}_r \to \mathbb{C}^d$. We refer to the square of the operator norm of $t[\mathsf{C},\mathsf{D}]$ as the \textbf{relative energy} of the pair $(\mathsf{C},\mathsf{D})$ and denote it by $\mathfrak{e}(\mathsf{C},\mathsf{D})$.\\
\\
If $\mathsf{C},\mathsf{D}:\mathbb{F} \to \mathrm{Mat}_{d \times d}(\mathbb{C})$ are strictly positive definite we define the relative energy of the pair $(\mathsf{C},\mathsf{D})$ to be $\sup_{r \in \mathbb{N}} \mathfrak{e}(\mathsf{C} \rest \mathbb{B}_r,\mathsf{D} \rest \mathbb{B}_r)$. We continue to denote it by $\mathfrak{e}(\mathsf{C},\mathsf{D})$. In general we may have $\mathfrak{e}(\mathsf{C},\mathsf{D}) = \infty$. If $\mathfrak{e}(\mathsf{C},\mathsf{D}) < \infty$ then there is a naturally defined transport operator from $\mathscr{X}(\mathsf{C})$ to $\mathscr{X}(\mathsf{D})$, which we continue to denote by $t[\mathsf{C},\mathsf{D}]$. \label{def.transportop} \end{definition}

The relevance of Definition \ref{def.transportop} is that the transport operator between two strictly positive definite functions defined on all of $\mathbb{F}$ clearly intertwines the associated unitary representations. Thus transport operators will be useful in constructing commuting representations of $\mathbb{F}$.

\subsection{Low energy extension conjecture} \label{sec.ilhanomar}

Our main harmonic analysis conjecture is the following. 

\begin{conjecture}[Existence of extensions with low energy gain] \label{lem.nogain} Let $r,d \in \mathbb{N}$ and let $\omega > 0$. Let $\mathsf{C}_1,\ldots,\mathsf{C}_n$ be elements of $\mathrm{NSPD}(r,d)$. Then for each $m \in [n]$ there exists a strictly positive definite function $\hat{\mathsf{C}}_m:\mathbb{F} \to \mathrm{Mat}_{d \times d}(\mathbb{C})$ such that such that $\mathsf{C}_m = \hat{\mathsf{C}}_m \rest \mathbb{B}_r$ and such that $\mathfrak{e}(\mathsf{C}_m,\mathsf{C}_k) = \mathfrak{e}(\hat{\mathsf{C}}_m,\hat{\mathsf{C}}_k)$ for all $m,k \in [n]$. \end{conjecture}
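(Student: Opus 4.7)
My first step is to reformulate the energy constraints algebraically. Unpacking Definition \ref{def.transportop}, one sees that for strictly positive definite functions on $\mathbb{F}$, $\mathfrak{e}(\mathsf{C},\mathsf{D}) \leq \lambda$ holds if and only if $\lambda\mathsf{C} - \mathsf{D}$ is positive definite in the sense of (\ref{eq.posdef}). Setting $\lambda_{m,k} := \mathfrak{e}(\mathsf{C}_m,\mathsf{C}_k)$ and using the obvious monotonicity of $\mathfrak{e}$ under extension, the conjecture reduces to producing strictly positive definite extensions $\hat{\mathsf{C}}_m:\mathbb{F} \to \mathrm{Mat}_{d \times d}(\mathbb{C})$ of the $\mathsf{C}_m$ such that every Hermitian pencil $\lambda_{m,k}\hat{\mathsf{C}}_m - \hat{\mathsf{C}}_k$ remains positive definite on $\mathbb{F}$. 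Since a normalized matrix-valued positive definite function takes values in the closed set of contractions (apply (\ref{eq.posdef}) with $E = \{e,g\}$), the space of $n$-tuples of normalized positive definite functions on any fixed ball is compact, so a Tychonoff/diagonal argument further reduces the conjecture to the finite claim: for each $R \geq r$ there exist $\hat{\mathsf{C}}_m^{(R)} \in \mathrm{NSPD}(R,d)$ restricting to $\mathsf{C}_m$ on $\mathbb{B}_r$ with pencil positivity on $\mathbb{B}_R$.

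\textbf{Inductive extension along $\preceq$.} To establish the finite claim I would induct on the shortlex order $\preceq$, adjoining group elements one at a time. The generalized Cayley graph $\mathrm{Cay}(\mathbb{F},g)$ is the combinatorial object tracking which positive-definiteness inequalities are live at each stage: passing from $g$ to $g_\downarrow$ activates only finitely many new inequalities, all involving values in a local neighborhood of $g_\downarrow$ thanks to the tree-like structure of $\mathbb{F}$. For a single positive definite function, classical Bakonyi-Timotin / Schur extension theory parametrizes the admissible matrix values $\hat{\mathsf{C}}_m(g_\downarrow)$ as a closed matrix ball whose center is a Schur complement of the existing Gram matrix and whose radius is a product of positive operator square roots of defect terms. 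The inductive step therefore reduces to the problem of choosing a tuple $\bigl(\hat{\mathsf{C}}_m(g_\downarrow)\bigr)_{m=1}^n$ in the intersection of $n$ matrix balls, subject to the additional $n^2$ linear pencil inequalities coming from $\lambda_{m,k}\hat{\mathsf{C}}_m - \hat{\mathsf{C}}_k \geq 0$.

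\textbf{Main obstacle.} The crux of the argument, and presumably the reason only a weak case is established, is showing this joint extension set is nonempty. The naive strategy of picking the center of each individual ball --- the ``central'' or maximum-entropy extension --- fails because this choice is a nonlinear function of the defining data and will generally violate the pencil constraints linking different indices $m$. A more promising route is to interpret the problem through operator-theoretic dilation theory: the transports $t[\mathsf{C}_m,\mathsf{C}_k]$ are partial intertwiners of the finite-dimensional ``partial representations'' encoded by the $\mathsf{C}_m$, and an energy-preserving simultaneous extension is precisely a simultaneous norm-preserving lifting of all these intertwiners to intertwiners of full representations of $\mathbb{F}$. Classical commutant lifting results and their noncommutative analogues in the free semigroup algebra setting handle single intertwiners; what seems to be missing is a joint-lifting theorem that extends all the $t[\mathsf{C}_m,\mathsf{C}_k]$ compatibly within a common family of ambient representations. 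Supplying this joint lifting --- or alternatively, a direct matrix-analytic proof that the intersection of matrix balls cut out by the pencil constraints is always nonempty at each step --- is the principal obstacle and the heart of the conjecture.
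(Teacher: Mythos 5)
There is a genuine gap, and it is the one you yourself flag: your text is not a proof but a reduction plus an acknowledgement that the decisive step is open. The statement you were asked to prove is Conjecture \ref{lem.nogain}, which is the paper's main \emph{conjecture}: the authors do not prove it (they prove only that it implies Conjecture \ref{thm.half} and hence Conjecture \ref{thm.CEC}, and, per the abstract, a weak special case), so any complete argument for it would in particular settle Connes' embedding conjecture. Your preliminary reductions are sensible and essentially correct: the equivalence $\mathfrak{e}(\mathsf{C},\mathsf{D}) \leq \lambda$ iff $\lambda\mathsf{C}-\mathsf{D}$ satisfies (\ref{eq.posdef}) does follow from Definition \ref{def.transportop} (modulo keeping track of the $\mathrm{NSPD}(r,d)$ versus $\mathrm{NSPD}(2r,d)$ radius conventions), the lower bound $\mathfrak{e}(\hat{\mathsf{C}}_m,\hat{\mathsf{C}}_k) \geq \mathfrak{e}(\mathsf{C}_m,\mathsf{C}_k)$ is automatic so only the pencil inequalities matter, and a compactness/diagonal argument does reduce to ball-by-ball extension (though note that strict positive definiteness, which the conjecture demands of $\hat{\mathsf{C}}_m$, can be lost in such a limit and would need a separate repair). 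But these reductions only repackage the problem: the "finite claim" for each $R$, and equivalently the nonemptiness of the joint one-step extension set in your shortlex induction, is the entire content of the conjecture, and your "Main obstacle" paragraph leaves it unproved. Invoking Bakonyi--Timotin for a single function and hoping for a joint commutant-lifting theorem is a statement of what would be needed, not an argument that it exists.

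Moreover, there is a concrete reason to expect that the missing step cannot be supplied in the generality you propose: since the paper shows Conjecture \ref{lem.nogain} implies Conjecture \ref{thm.CEC}, and Connes' embedding conjecture has since been refuted (via the $\mathrm{MIP}^\ast=\mathrm{RE}$ theorem), Conjecture \ref{lem.nogain} must fail for some data. In particular, the intersection of the matrix balls cut out by the individual Bakonyi--Timotin constraints and the pencil constraints $\lambda_{m,k}\hat{\mathsf{C}}_m - \hat{\mathsf{C}}_k \geq 0$ should be expected to be empty at some stage for some families $\mathsf{C}_1,\ldots,\mathsf{C}_n$, so no choice of ``central'' extension, joint lifting, or other selection rule can work in general. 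So the verdict is not merely that a lemma is missing, but that the strategy as stated aims at a statement that is (almost certainly) false; any salvage would have to restrict to a weak case, which is also all the paper claims to do.
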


In order for Conjecture \ref{lem.nogain} to be plausible we ought to know that any element of $\mathrm{NSPD}(r,d)$ admits an extension to a positive definite function defined on all of $\mathbb{F}$. This fact appears as Proposition 4.4 in \cite{MR2316876} and as Lemma 25 in \cite{MR3067294}.

\section{Proof of Conjecture \ref{thm.half} from Conjecture \ref{lem.nogain}} \label{sec.aoc}

In Section \ref{sec.aoc} we prove Conjecture \ref{thm.half} from Conjecture \ref{lem.nogain}.

\subsection{Unitary approximate conjugacy of representations}

\subsubsection{Generalities} \label{segment.uac}

We will use the theory of weak containment of unitary representations of countable discrete groups, for which we refer the reader to Appendix H of \cite{MR2583950}. We will say that a unitary representation of a countable discrete group $G$ is maximal if it weakly contains every other unitary representation of $G$. \\
\\
If $G$ is a countable discrete group, $\mathscr{X}$ is a Hilbert space and $\rho: G \to \mathrm{U}(\mathscr{X})$ is a unitary representation, there is a unique extension of $\rho$ to a $\ast$-homomorphism from $C^\ast(G)$ to the algebra $\mathrm{B}(\mathscr{X})$ of bounded operators on $\mathscr{X}$. We denote this extension by $\tilde{\rho}$. Let $\xi: G \to \mathrm{U}(\mathscr{Y})$ be another unitary representation, potentially on a different Hilbert space. By Theorem F.4.4 in \cite{MR2415834}, if $\xi$ is weakly contained in $\rho$ then $||\tilde{\xi}(s)||_{\mathrm{op}} \leq ||\tilde{\rho}(s)||_{\mathrm{op}}$ for all $s \in C^\ast(G)$. It follows that if $\rho$ is a maximal unitary representation then $\tilde{\rho}$ is injective. We now recall a different notion of approximation for representations.

\begin{definition} Unitary representations $\rho:G \to \mathrm{U}(\mathscr{X})$ and $\xi:G \to \mathrm{U}(\mathscr{Y})$ are said to be \textbf{unitarily approximately conjugate} if there is a sequence of unitary operators $u_n:\mathscr{X} \to \mathscr{Y}$ such that for each $g \in G$ we have \[ \lim_{n \to \infty} ||u_n^{-1} \xi(g) u_n - \rho(g)||_{\mathrm{op}} = 0.\] \end{definition}

The following is a special case of Corollary 1.7.5 in \cite{MR2391387}.

\begin{theorem}[Voiculescu] Let $G$ be a countable discrete group. Suppose $\xi$ and $\rho$ are unitary representations of $G$ such that $\tilde{\xi}$ and $\tilde{\rho}$ are injective and such that $\tilde{\xi}(C^*(G))$ and $\tilde{\rho}(C^*(G))$ contain no nonzero compact operators. Then $\xi$ and $\rho$ are unitarily approximately conjugate. \label{thm.voi} \end{theorem}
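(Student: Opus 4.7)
The plan is to deduce the statement from Voiculescu's noncommutative Weyl--von Neumann theorem applied to the full group C*-algebra. Since $G$ is countable, $C^\ast(G)$ is separable and unital, and the hypotheses on $\tilde{\xi}$ and $\tilde{\rho}$ translate to saying that these are unital faithful $\ast$-representations of a separable unital C*-algebra on separable Hilbert spaces whose images contain no nonzero compact operators. What we want to produce is a sequence of unitaries $u_n : \mathscr{X} \to \mathscr{Y}$ such that $\| u_n^{-1} \tilde{\rho}(a) u_n - \tilde{\xi}(a) \|_{\mathrm{op}} \to 0$ for every $a \in C^\ast(G)$; specializing $a$ to $\delta_g$ for $g \in G$ then yields the stated convergence for $\rho(g)$ and $\xi(g)$.

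The core fact I would invoke is the Voiculescu absorption lemma: if $\pi : A \to \mathrm{B}(\mathscr{H})$ is a unital $\ast$-representation of a separable unital C*-algebra with $\pi(A) \cap \mathrm{K}(\mathscr{H}) = 0$ and $\sigma : A \to \mathrm{B}(\mathscr{K})$ is any unital $\ast$-representation on a separable Hilbert space, then $\pi \oplus \sigma$ and $\pi$ are approximately unitarily equivalent. Granting this, one obtains the theorem by a doubling argument: setting $\pi_1 = \tilde{\xi}$ and $\pi_2 = \tilde{\rho}$, absorption gives $\pi_1 \oplus \pi_2 \approx_u \pi_1$ and $\pi_2 \oplus \pi_1 \approx_u \pi_2$, and since $\pi_1 \oplus \pi_2$ and $\pi_2 \oplus \pi_1$ are unitarily equivalent via the flip, transitivity of approximate unitary equivalence yields $\pi_1 \approx_u \pi_2$. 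Composing with the natural isomorphism then provides the sequence of unitaries required by the definition of unitary approximate conjugacy.

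The hard part, and the step I would expect to require the most work, is the absorption lemma itself. Its proof proceeds by constructing, for each finite $F \subset A$ and each $\epsilon > 0$, a finite-rank projection $P$ on the Hilbert space of $\pi$ which almost commutes with $\pi(F)$ in operator norm and whose compression $P\pi(\cdot)P$ approximately reproduces a chosen finite-dimensional corner of $\sigma$. The technical engine behind this is the existence of a quasi-central approximate unit of finite-rank projections for $\pi(A)$ in $\mathrm{B}(\mathscr{H})$, which is available precisely because $\pi(A) \cap \mathrm{K}(\mathscr{H}) = 0$, combined with a Hahn--Banach separation in the trace-class pairing used to match matrix elements of $\sigma$ against compressions of $\pi$. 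Iterating this construction over a dense countable subset of $A$ with shrinking tolerances and passing to a limit then produces the approximately conjugating unitaries, closing the argument.
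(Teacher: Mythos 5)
The paper does not prove this theorem; it simply records it as a special case of Corollary 1.7.5 in Brown and Ozawa \cite{MR2391387}, so there is no proof in the paper to compare against. Your argument is the standard derivation of that corollary and is correct: pass to the induced faithful unital $*$-representations $\tilde{\xi},\tilde{\rho}$ of the separable unital $C^*$-algebra $C^*(G)$, invoke Voiculescu's absorption theorem to conclude that $\tilde{\xi}\oplus\tilde{\rho}$ is approximately unitarily equivalent to $\tilde{\xi}$ and $\tilde{\rho}\oplus\tilde{\xi}$ to $\tilde{\rho}$, close via the flip isomorphism and transitivity, and then specialize to the canonical unitaries of $C^*(G)$ coming from group elements to recover the claim for $\xi(g)$ and $\rho(g)$. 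This is in effect the proof of the cited corollary, so you have supplied the argument the paper outsources; your sketch of the absorption lemma (quasi-central approximate unit of finite-rank projections, available because $\pi(A)\cap\mathrm{K}(\mathscr{H})=0$, together with a Hahn--Banach separation in trace-class duality to match finite corners of $\sigma$ against compressions of $\pi$) is also the standard outline. One small caveat worth flagging: as you state the absorption lemma, with ``any'' unital $\sigma$, one needs the hypothesis $\ker\sigma\supseteq\ker\pi$ (or $\pi$ faithful); this is vacuous in your application since $\tilde{\xi}$ and $\tilde{\rho}$ are injective, but the general lemma should carry that condition.
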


We can now connect weak containment and unitary approximate conjugacy.

\begin{proposition} Suppose $\xi$ and $\rho$ are maximal unitary representations of $\mathbb{F}$. Then $\xi$ and $\rho$ are unitarily approximately conjugate. \label{prop.uac} \end{proposition}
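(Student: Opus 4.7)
The plan is to deduce the proposition from Voiculescu's Theorem \ref{thm.voi}. To apply that theorem I need to verify, for both $\tilde{\xi}$ and $\tilde{\rho}$, that the extension to $C^*(\mathbb{F})$ is injective and that its image contains no nonzero compact operator.

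Injectivity is immediate from the observation recorded just before Theorem \ref{thm.voi}: maximality of $\rho$ gives $||\tilde{\pi}(s)||_{\mathrm{op}} \leq ||\tilde{\rho}(s)||_{\mathrm{op}}$ for every unitary representation $\pi$ of $\mathbb{F}$ and every $s \in C^*(\mathbb{F})$; taking the supremum over $\pi$ yields the universal $C^*$-norm on the right, so $\tilde{\rho}$ is isometric and hence injective. The analogous argument applies to $\tilde{\xi}$.

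The main obstacle is showing that $\tilde{\rho}(C^*(\mathbb{F}))$ (and similarly $\tilde{\xi}(C^*(\mathbb{F}))$) contains no nonzero compact operator. I would argue by contradiction. Suppose $T = \tilde{\rho}(s)$ is a nonzero compact operator. Then $T^*T$ is compact, positive, and nonzero, so its largest positive eigenvalue is isolated with finite-dimensional eigenspace, and continuous functional calculus on $T^*T$ extracts a nonzero finite-rank spectral projection $P = \tilde{\rho}(p)$; injectivity produces a nonzero $p \in C^*(\mathbb{F})$ with $p = p^* = p^2$. The compression $P \tilde{\rho}(C^*(\mathbb{F})) P = \tilde{\rho}(p C^*(\mathbb{F}) p)$ sits inside $\mathrm{B}(P\mathscr{X})$, which is a finite-dimensional matrix algebra, so the hereditary subalgebra $p C^*(\mathbb{F}) p$ must itself be finite-dimensional. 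To close the argument I would leverage maximality: by Fell's absorption principle $\rho \otimes \lambda$ is unitarily equivalent to a multiple of the left regular representation $\lambda$ and is weakly contained in $\rho$ by maximality, so there is an induced $\ast$-homomorphism $\tilde{\rho}(C^*(\mathbb{F})) \to C^*_r(\mathbb{F}) \subseteq L(\mathbb{F})$ under which $p$ maps to a projection in the diffuse II$_1$ factor $L(\mathbb{F})$; combined with the finite-dimensionality of $p C^*(\mathbb{F}) p$ this should force the contradiction that $P = 0$.

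Once both hypotheses are verified, Theorem \ref{thm.voi} immediately yields the sequence of unitary operators $u_n:\mathscr{X} \to \mathscr{Y}$ implementing the unitary approximate conjugacy of $\xi$ and $\rho$. I expect the most delicate step to be the no-compact-operators verification; specifically, tying together the finite-rank structure of $P$ in $\mathscr{X}$, the finite-dimensionality of $p C^*(\mathbb{F}) p$, and the diffuseness of $L(\mathbb{F})$ into a clean contradiction requires some care, since the homomorphism to $C^*_r(\mathbb{F})$ does not preserve rank. The remainder of the argument is routine bookkeeping and a direct application of the cited theorems.
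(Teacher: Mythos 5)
Your overall skeleton coincides with the paper's: reduce to Theorem \ref{thm.voi}, with injectivity of $\tilde{\xi}$ and $\tilde{\rho}$ obtained exactly as in Segment \ref{segment.uac} (maximality makes the extensions isometric for the universal norm, hence injective). Where you diverge is the second hypothesis of Voiculescu's theorem: the paper disposes of the no-compact-operators condition by citing Corollary VII.6.7 of \cite{MR1402012}, which states precisely that the image of an injective representation of $C^\ast(\mathbb{F})$ contains no nonzero compact operator, so once injectivity is known there is nothing left to verify. You attempt to prove this fact from scratch, and that is where the proposal has a genuine gap.

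The gap is the final contradiction. Your reduction is fine up to the point where you have a nonzero projection $p \in C^\ast(\mathbb{F})$ with $\tilde{\rho}(p)$ of finite rank and $p\,C^\ast(\mathbb{F})\,p$ finite dimensional. But pushing $p$ into $C^\ast_r(\mathbb{F}) \subseteq L(\mathbb{F})$ and invoking diffuseness cannot close the argument. The canonical surjection $C^\ast(\mathbb{F}) \to C^\ast_r(\mathbb{F})$ (which maximality already provides directly, so the Fell absorption detour is redundant) has a very large kernel because $\mathbb{F}$ is not amenable, and nothing in your argument prevents $p$ from lying in that kernel. In fact, since $C^\ast_r(\mathbb{F})$ has no nontrivial projections (Pimsner--Voiculescu), the image of $p$ must be $0$ or $1$: if it were $1$ you could derive a contradiction from finite-dimensionality of $p\,C^\ast(\mathbb{F})\,p$, but if it is $0$ then the diffuseness of $L(\mathbb{F})$ says nothing at all, and neither maximality nor the finite-dimensionality of $p\,C^\ast(\mathbb{F})\,p$ rules this case out. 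You flag this yourself (``the homomorphism to $C^\ast_r(\mathbb{F})$ does not preserve rank''), but that is exactly the unresolved point, not a detail of bookkeeping. The contradiction has to come from structural properties of $C^\ast(\mathbb{F})$ itself --- for instance, from the observation that $\{s \in C^\ast(\mathbb{F}) : \tilde{\rho}(s) \text{ is compact}\}$ would be a nonzero closed two-sided ideal isomorphic to a C$^\ast$-subalgebra of the compact operators, which the structure of $C^\ast(\mathbb{F})$ forbids --- and this is precisely what the cited Corollary VII.6.7 of \cite{MR1402012} packages. Citing it, as the paper does, completes the proof; after that, Theorem \ref{thm.voi} applies exactly as you say.
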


\begin{proof}[Proof of Proposition \ref{prop.uac}] By Corollary $\mathrm{VII}$.6.7 in \cite{MR1402012} the image of an injective representation of $C^*(\mathbb{F})$ contains no nonzero compact operators. Thus Proposition \ref{prop.uac} follows from Theorem \ref{thm.voi}. \end{proof}

\subsubsection{Introducing initial data} \label{subsec.tau}

Consider the group $G = \mathbb{F} \times \mathbb{F}$. In order to keep a distinction between the factors, we will write $\mathbb{F}_{\triangleleft}$ for the left copy and $\mathbb{F}_{\triangleright}$ for the right copy. We again fix free generators for each copy and endow them with the corresponding word lengths. We will consistently use the letters $g,h$ for elements of $\mathbb{F}_\tril$ and $g',h'$ for elements of $\mathbb{F}_\trir$. If $\mathscr{X}$ is a Hilbert space, $\rho:\mathbb{F}_\tril \times \mathbb{F}_\trir \to \mathrm{GL}(\mathscr{X})$ is a linear representation and $\jmath \in \{\tril,\trir\}$ we will write $\rho_\jmath$ for the restriction of $\rho$ to $\mathbb{F}_\jmath$. We will also write $\mathbb{B}_{r,\jmath}$ for the ball of radius $r$ around the identity in $\mathbb{F}_\jmath$.\\
\\
Fix a Hilbert space $\mathscr{X}$, a unitary representation $\rho:\mathbb{F}_\tril \times \mathbb{F}_\trir \to \mathrm{U}(\mathscr{X})$ and a unit vector $x \in \mathscr{X}$. It is clear that no generality is lost in Conjecture \ref{thm.half} if we assume that $\rho$ is maximal and we indeed make this assumption. Fix finite sets $E \subseteq \mathbb{F}_\tril$, $F \subseteq \mathbb{F}_\trir$ and let $r \in \mathbb{N}$ be such that $E \subseteq \mathbb{B}_{r,\tril}$ and $F \subseteq \mathbb{B}_{r,\trir}$. We may assume that $r \geq 5$. Also fix $\epsilon \in (0,1)$. We have now introduced all the data in the hypotheses of Conjecture \ref{thm.half}.\\
\\
Write $L_{r,\epsilon} = \sqrt{\frac{K_r}{\epsilon}}$. Choose $\delta > 0$ such that \begin{equation} 320L_{r,R}^{10}K_r\delta \leq \epsilon \label{eq.numerical-1} \end{equation}

\subsubsection{Approximate conjugacy with the profinite completion} \label{subsec.conjugacy}

Let $\ov{\mathbb{F}}$ denote the profinite completion of $\mathbb{F}$ and let $\mu$ be its Haar probability measure. For each finite quotient $\Lambda$ of $\mathbb{F}$, there exists a canonical projection $\Pi_\Lambda:\ov{\mathbb{F}} \twoheadrightarrow \Lambda$. Writing $\mathbf{1}_B$ for the indicator function of a subset $B$ of $\ov{\mathbb{F}}$, for each $\lambda \in \Lambda$ we have \begin{align*} \nml\mathbf{1}_{\Pi_{\Lambda}^{-1}(\lambda)}\nmr_2 & = \left( \int_{\ov{\mathbb{F}}} \left \vert \mathbf{1}_{\Pi_\Lambda^{-1}(\lambda)}(\omega) \right \vert^2 \deee \mu(\omega) \right)^{\frac{1}{2}} \\ & = \sqrt{\mu\left(\Pi_\Lambda^{-1}(\lambda)\right)} \\ & = \frac{1}{\sqrt{|\Lambda|}} \end{align*} Moreover, if $\lambda$ and $\lambda'$ are distinct elements of $\Lambda$ then the sets $\Pi_{\Lambda}^{-1}(\lambda)$ and $\Pi_{\Lambda}^{-1}(\lambda')$ are disjoint, so that $\mathbf{1}_{\Pi_\Lambda^{-1}(\lambda)}$ and $\mathbf{1}_{\Pi_\Lambda^{-1}(\lambda')}$ are orthogonal in $L^2(\ov{\mathbb{F}},\mu)$. Therefore the set of functions \[ \left\{ \sqrt{|\Lambda|}\mathbf{1}_{\Pi_{\Lambda^{-1}(\lambda)}}:\lambda \in \Lambda \right\} \] is orthonormal.\\
\\
The profinite structure of $\ov{\mathbb{F}}$ guarantees that the collection of sets \[   \left\{\Pi_\Lambda^{-1}(\lambda):\lambda \in \Lambda,\, \Lambda \mbox{ is a finite quotient of }\mathbb{F} \right\} \] generates the Borel $\sigma$-algebra of $\ov{\mathbb{F}}$. Therefore we have that the span of the functions \begin{equation} \label{eq.circle-1}  \left\{\sqrt{|\Lambda|}\mathbf{1}_{\Pi_\Lambda^{-1}(\lambda)}:\lambda \in \Lambda, \,\Lambda \mbox{ is a finite quotient of }\mathbb{F} \right \} \end{equation} is dense in $L^2(\ov{\mathbb{F}},\mu)$. Choose a sequence $(\Lambda_n)_{n=1}^\infty$ of finite quotients of $\mathbb{F}$ such $\Lambda_n$ is a quotient of $\Lambda_{n+1}$ and such that any finite quotient $\Lambda$ of $\mathbb{F}$ is a quotient of $\Lambda_n$ for some $n \in \mathbb{N}$. Write $\Pi_n$ for $\Pi_{\Lambda_n}$. Then the span of the set of functions \begin{equation} \label{eq.circle-2} \bigcup_{n=1}^\infty\left \{ \sqrt{|\Lambda_n|} \mathbf{1}_{\Pi_n^{-1}(\lambda)}:\lambda \in \Lambda_n \right\} \end{equation} is equal to the span of the set of functions in (\ref{eq.circle-1}). Hence the span of the set of functions in (\ref{eq.circle-2}) is dense in $L^2(\ov{\mathbb{F}},\mu)$. Moreover, the spans of each of the sets inside the union in (\ref{eq.circle-2}) are increasing. \\ 
\\
By considering induced representations, we see that since $\rho$ is a maximal representation of $\mathbb{F}_\tril \times \mathbb{F}_\trir$ we must have that $\rho_\tril$ is a maximal representation of $\mathbb{F}_\tril$.  By Theorem 3.1 in \cite{MR2931406}, the left translation action of $\mathbb{F}$ on $(\ov{\mathbb{F}},\mu)$ is maximal in the order of weak containment among measure preserving actions of $\mathbb{F}$. We refer the reader to Chapter 10 of \cite{MR2583950} for information on this variant of weak containment, but all we will need to know about it is that Proposition 10.5 and Theorem E.1 in \cite{MR2583950} imply that the Koopman representation of a maximal action is a maximal representation. Write $\ov{\kappa}:\mathbb{F} \to \mathrm{U}(L^2(\ov{\mathbb{F}},\mu))$ for the Koopman representation of the left translation action, so that Proposition \ref{prop.uac} implies $\rho_\tril$ and $\ov{\kappa}$ are unitarily approximately conjugate. Let $u:\mathscr{X} \to L^2(\ov{\mathbb{F}},\mu)$ be a unitary operator such that \[||u^{-1} \ov{\kappa}(g)u - \rho_\tril(g)||_{\mathrm{op}}  \leq \delta \] for all $g \in \mathbb{B}_{1,\tril}$. \\
\\
Consider the vector $u x$. Our previous discussion of (\ref{eq.circle-2}) implies that we can find $n \in \mathbb{N}$ and a function $\alpha: \Lambda_n \to \mathbb{C}$ with\[ \nml u x - \sum_{\lambda \in \Lambda_n} \alpha(\lambda) \sqrt{|\Lambda_n|}\mathbf{1}_{\Pi_n^{-1}(\lambda)} \nmr_2 \leq \delta \]  We may assume $n$ is large enough that the balls of radius $4R$ in $\mathbb{F}/\Lambda_n$ are isomorphic to the balls of radius $4R$ in $\mathbb{F}$.\\
\\
The partition $\{\Pi_n^{-1}(\lambda):\lambda \in \Lambda_n\}$ of $\ov{\mathbb{F}}$ is permuted by the left translation action of $\mathbb{F}$ on $\ov{\mathbb{F}}$ so that $g\Pi_n^{-1}(\lambda) = \Pi_n^{-1}(g\lambda)$ for all $g \in \mathbb{F}$ and $\lambda \in \Lambda_n$. Thus we have $\kappa(g)  \mathbf{1}_{\Pi_n^{-1}(\lambda)} = \mathbf{1}_{\Pi_n^{-1}(g\lambda)}$. Since the vectors \[\left \{ \sqrt{|\Lambda_n|} \mathbf{1}_{\Pi_n^{-1}(\lambda)}: \lambda \in \Lambda_n \right\} \] are orthonormal and $x$ is a unit vector, we may assume that \[ \sum_{\lambda \in \Lambda_n} |\alpha(\lambda)|^2 = 1 \]  Enumerate $\Lambda_n = \{\lambda_1,\ldots,\lambda_d\}$ and for $j \in [d]$ let \[ x_j = u^{-1}  \sqrt{d} \,\mathbf{1}_{\Pi^{-1}_n(\lambda_j)} \] We may assume without loss of generality that $d \geq R$. Write $\kappa = u^{-1} \ov{\kappa} u$. We summarize the objects we have just constructed.

\begin{itemize} \item An orthonormal set of vectors $x_1,\ldots,x_d$ in $\mathscr{X}$ and an element $\alpha \in \mathbb{C}^d$ with \begin{equation} \sum_{j=1}^d |\alpha_j|^2 = 1 \label{eq.alphal2} \end{equation} such that \begin{equation} ||x - \alpha_1 x_1 - \cdots - \alpha_d x_d|| \leq \delta \label{eq.uac-x} \end{equation}   \item An action $\sigma:\mathbb{F}_\tril \to \mathrm{Sym}(d)$. \item A unitary representation $\kappa:\mathbb{F}_\tril \to \mathrm{U}(\mathscr{X})$ such that \begin{equation} ||\kappa(g) - \rho_\tril(g)||_{\mathrm{op}} \leq \delta \label{eq.uac-g} \end{equation} for all $g \in \mathbb{B}_{1,\tril}$ and such that \begin{equation} \kappa(g) x_j = x_{\sigma(g)j}  \label{eq.uac-k} \end{equation} for all $g \in \mathbb{F}_\tril$ and all $j \in [d]$. \end{itemize}

\subsection{Building a half finite linear representation}

\subsubsection{Proximity between inner products at individual nodes}

\begin{proposition} \label{prop.prox-ind} Let $g',h' \in \mathbb{B}_{r,\trir}$ and let $\beta,\eta \in \mathbb{C}^d$. Also let $g \in \mathbb{B}_{1,\tril}$ and let $\varsigma \in \mathrm{Sym}(d)$. We have \begin{align*} \Biggl \vert \Biggl \langle  \rho_\trir(g')  \sum_{j=1}^d \beta_j x_{\varsigma j},  \rho_\trir(h')  \sum_{k=1}^d \eta_k x_{\varsigma k} \Biggr \rangle   - & \Biggl \langle \rho_\trir(g')   \sum_{j=1}^d \beta_jx_{\sigma(g)\varsigma j},  \rho_\trir(h')  \sum_{k=1}^d \eta_k x_{\sigma(g)\varsigma k} \Biggr \rangle \Biggr \vert \\ & \leq 2\delta ||\beta||_2||\eta||_2  \end{align*} \end{proposition}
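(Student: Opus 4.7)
The plan is to exploit the fact that the two inner products in the statement differ by replacing each $x_{\varsigma j}$ with $x_{\sigma(g)\varsigma j}$, which by (\ref{eq.uac-k}) is the same as applying $\kappa(g)$ to the vector $\sum_j \beta_j x_{\varsigma j}$ (and analogously for $\eta$). Setting $u = \sum_j \beta_j x_{\varsigma j}$ and $v = \sum_k \eta_k x_{\varsigma k}$, the inequality to prove becomes
\[ \bigl| \langle \rho_\trir(g') u, \rho_\trir(h') v \rangle - \langle \rho_\trir(g') \kappa(g) u, \rho_\trir(h') \kappa(g) v \rangle \bigr| \leq 2 \delta \|\beta\|_2 \|\eta\|_2. \]
The first key trick is to insert $\rho_\tril(g)$ for free into the first inner product: because $\rho$ is a representation of the product group $\mathbb{F}_\tril \times \mathbb{F}_\trir$, the operators $\rho_\tril(g)$ and $\rho_\trir(g'), \rho_\trir(h')$ all commute, and $\rho_\tril(g)$ is unitary, so
\[ \langle \rho_\trir(g') u, \rho_\trir(h') v \rangle = \langle \rho_\tril(g) \rho_\trir(g') u, \rho_\tril(g) \rho_\trir(h') v \rangle = \langle \rho_\trir(g') \rho_\tril(g) u, \rho_\trir(h') \rho_\tril(g) v \rangle. \]

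The second step is a routine two-term telescoping estimate. Writing $T = \rho_\trir(g')$ and $S = \rho_\trir(h')$, I want to bound
\[ \bigl| \langle T\rho_\tril(g) u, S\rho_\tril(g) v \rangle - \langle T\kappa(g) u, S\kappa(g) v \rangle \bigr|, \]
which by inserting $\pm \langle T\kappa(g) u, S\rho_\tril(g) v \rangle$ and applying Cauchy--Schwarz is at most $\|T(\rho_\tril(g) - \kappa(g)) u\| \|S \rho_\tril(g) v\| + \|T \kappa(g) u\| \|S(\rho_\tril(g) - \kappa(g)) v\|$. Since $T$, $S$, $\rho_\tril(g)$, and $\kappa(g)$ are all unitary and $\|\rho_\tril(g) - \kappa(g)\|_{\mathrm{op}} \leq \delta$ by (\ref{eq.uac-g}) (as $g \in \mathbb{B}_{1,\tril}$), this is bounded by $2 \delta \|u\| \|v\|$.

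Finally, because $x_1, \ldots, x_d$ are orthonormal and $\varsigma$ is a permutation, $x_{\varsigma 1}, \ldots, x_{\varsigma d}$ are also orthonormal, so $\|u\| = \|\beta\|_2$ and $\|v\| = \|\eta\|_2$, completing the estimate. There is no real obstacle here; the only subtlety worth flagging is the use of the commutation $\rho_\tril \leftrightarrow \rho_\trir$, without which the trick of inserting $\rho_\tril(g)$ into the first inner product would fail and one could not reduce the problem to comparing $\rho_\tril(g)$ with $\kappa(g)$.
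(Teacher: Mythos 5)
Your argument is correct and is essentially the paper's own proof: both use (\ref{eq.uac-k}) to replace the permuted vectors by $\kappa(g)$ applied to $x_\beta, x_\eta$, insert $\rho_\tril(g)$ into the first inner product at no cost via commutation with $\rho_\trir$ and unitarity, and then finish with a telescoping Cauchy--Schwarz estimate against $\|\kappa(g)-\rho_\tril(g)\|_{\mathrm{op}} \leq \delta$ together with orthonormality of the $x_j$. No gaps.
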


\begin{proof}[Proof of Proposition \ref{prop.prox-ind}] All norms and inner products in the proof of Proposition \ref{prop.prox-ind} will be in $\mathscr{X}$. From (\ref{eq.uac-k}) we have \begin{align} & \Biggl \vert \Biggl \langle  \rho_\trir(g')  \sum_{j=1}^d \beta_j x_{\varsigma j},  \rho_\trir(h')  \sum_{k=1}^d \eta_k x_{\varsigma k} \Biggr \rangle   -  \left \langle \rho_\trir(g')   \sum_{j=1}^d \beta_jx_{\sigma(g)\varsigma j},  \rho_\trir(h')  \sum_{k=1}^d \eta_k x_{\sigma(g)\varsigma k} \right \rangle \Biggr \vert \nonumber \\ & = \Biggl \vert \left \langle \rho_\trir(g')  \sum_{j=1}^d \beta_jx_{\varsigma j}, \rho_\trir(h')  \sum_{k=1}^d \eta_k x_{\varsigma k} \right \rangle  -  \left \langle \rho_\trir(g')\kappa(g)  \sum_{j=1}^d \beta_j x_{\varsigma j}, \rho_\trir(h') \kappa(g)   \sum_{k=1}^d \eta_k x_{\varsigma k}  \right \rangle \Biggr \vert  \label{eq.obtain-2} \end{align}

Write $x_\beta$ for $\sum_{j=1}^d \beta_j x_{\varsigma j}$ and $x_\eta$ for $\sum_{k=1}^d \eta_k x_{\varsigma k}$. We compute

\begin{align} (\ref{eq.obtain-2}) &\leq \bigl \vert \bigl \langle \rho_\trir(g') x_\beta, \rho_\trir(h') x_\eta \bigr \rangle  -  \bigl \langle  \rho_\trir(g')\rho_\tril(g) x_\beta, \rho_\trir(h') \rho_\tril(g)  x_\eta \bigr \rangle \bigr \vert \label{eq.obtain-2.5} \\ & \hspace{0.5 in} + \bigl \vert \bigl \langle \rho_\trir(g')\kappa(g)  x_\beta,  \rho_\trir(h')\kappa(g) x_\eta \bigr \rangle -  \bigl \langle \rho_\trir(g')\rho_\tril(g) x_\beta, \rho_\trir(h') \rho_\tril(g) x_\eta \bigr \rangle \bigr \vert  \nonumber \\ & = \bigl \vert \bigl \langle \rho_\trir(g') x_\beta, \rho_\trir(h') x_\eta \bigr \rangle  -  \bigl \langle \rho_\tril(g)  \rho_\trir(g') x_\beta, \rho_\tril(g) \rho_\trir(h')  x_\eta \bigr \rangle \bigr \vert \label{eq.obtain-4} \\ & \hspace{1 in} + \bigl \vert \bigl \langle \rho_\trir(g')\kappa(g)  x_\beta,  \rho_\trir(h')\kappa(g) x_\eta \bigr \rangle  -  \bigl \langle \rho_\trir(g')\rho_\tril(g) x_\beta, \rho_\trir(h') \rho_\tril(g) x_\eta \bigr \rangle \bigr \vert \label{eq.obtain-5}   \\ & = \bigl \vert \bigl \langle \rho_\trir(g')\kappa(g)  x_\beta,  \rho_\trir(h')\kappa(g) x_\eta \bigr \rangle  -  \bigl \langle \rho_\trir(g')\rho_\tril(g) x_\beta, \rho_\trir(h') \rho_\tril(g) x_\eta \bigr \rangle \big \vert \label{eq.obtain-6}  \\ & \leq \bigl \vert \bigl \langle \rho_\trir(g')\kappa(g)  x_\beta,  \rho_\trir(h')\kappa(g) x_\eta \bigr \rangle  - \bigl \langle \rho_\trir(g')\kappa(g) x_\beta, \rho_\trir(h') \rho_\tril(g) x_\eta \bigr \rangle \bigr \vert \nonumber  \\ & \hspace{1 in} + \bigl \vert  \bigl \langle \rho_\trir(g')\kappa(g) x_\beta, \rho_\trir(h') \rho_\tril(g) x_\eta \bigr \rangle -  \bigl \langle \rho_\trir(g')\rho_\tril(g) x_\beta, \rho_\trir(h') \rho_\tril(g) x_\eta \bigr \rangle \big \vert  \nonumber \\ & = \bigl \vert \bigl \langle \rho_\trir(g')\kappa(g) x_\beta, \rho_\trir(h')(\kappa(g) - \rho_\tril(g))    x_\eta \bigr \rangle \bigr \vert \nonumber  \\ & \hspace{1 in} + \bigl \vert \bigl \langle \rho_\trir(g')(\kappa(g)-\rho_\tril(g)) x_\beta, \rho_\trir(h')\rho_\tril(g)x_\eta \bigr \rangle \bigr \vert  \nonumber  \\ & \leq || \rho_\trir(g')\kappa(g) x_\beta|| \, || \rho_\trir(h')(\kappa(g) - \rho_\tril(g))    x_\eta || \label{eq.obtain-6.95} \\ & \hspace{1 in} +||\rho_\trir(g')(\kappa(g) -\rho_\tril(g)) x_\beta|| \, || \rho_\trir(h')\rho_\tril(g) x_\eta||  \label{eq.obtain-7}   \\ & = || x_\beta || \, || (\kappa(g) - \rho_\tril(g))    x_\eta || + || (\kappa(g) -\rho_\tril(g)) x_\beta|| \, || x_\eta|| \label{eq.obtain-7.5} \\ &\leq 2\, ||\kappa(g) -\rho_\tril(g)||_{\mathrm{op}} || x_\beta|| \, || x_\eta||  \label{eq.obtain-9}   \\ & = 2\, ||\kappa(g) -\rho_\tril(g)||_{\mathrm{op}} ||\beta||_2 ||\eta||_2   \label{eq.obtain-10} \\ & \leq  2\delta ||\beta||_2||\eta||_2 \label{eq.obtain-11} \end{align}

Here, \begin{itemize} \item (\ref{eq.obtain-2.5}) is equal to (\ref{eq.obtain-4}) since $\rho_\tril$ and $\rho_\trir$ commute, \item (\ref{eq.obtain-6}) follows from (\ref{eq.obtain-4}) - (\ref{eq.obtain-5}) since $\rho_\tril$ is unitary and therefore (\ref{eq.obtain-4}) is $0$, \item (\ref{eq.obtain-7.5}) follows from (\ref{eq.obtain-6.95}) - (\ref{eq.obtain-7}) since $\rho_\trir$ and $\kappa$ are unitary, \item (\ref{eq.obtain-10}) follows from (\ref{eq.obtain-9}) since $x_1,\ldots,x_d$ is orthonormal, \item and (\ref{eq.obtain-11}) follows from (\ref{eq.obtain-10}) by (\ref{eq.uac-g}) \end{itemize} Proposition \ref{prop.prox-ind} follows by combining (\ref{eq.obtain-2}) with (\ref{eq.obtain-11}).  \end{proof}

\subsubsection{Constructing a family of permuted positive definite functions}

For $\varsigma \in \mathrm{Sym}(d)$ define a positive definite function $\mathsf{C}_\varsigma:\mathbb{B}_{2r,\trir} \to \mathrm{Mat}_{d \times d}(\mathbb{C})$ by setting \begin{equation} \mathsf{C}_\varsigma\bigl((h')^{-1} g'\bigr)_{j,k} = \langle \rho_\trir(g') x_{\varsigma j}, \rho_\trir(h') x_{\varsigma k} \rangle \label{eq.1-1} \end{equation} for $g',h' \in \mathbb{B}_{r,\trir}$. Also define $\Delta_r:\mathbb{B}_{2r,\trir} \to \mathrm{Mat}_{d \times d}(\mathbb{C})$ by setting \[ \Delta_r(g') = \begin{cases} \mathbf{I}_d & \mbox{ if }g' =e \\ \mathbf{0}_d & \mbox{ if } g' \in \mathbb{B}_{2r} \setminus \{e\}  \end{cases} \] where $\mathbf{0}_d$ denotes the $d \times d$ zero matrix. Let $\mathsf{D}_\varsigma = (1-\epsilon )\mathsf{C}_\varsigma + \epsilon \Delta_r$. Note that for any function $\beta:\mathbb{B}_{r,\trir} \to \mathbb{C}^d$ we have \begin{align} & \nml \sum_{g' \in \mathbb{B}_{r,\trir}} \sum_{j=1}^d \beta(g')_j \Phi_{\mathsf{D}_\varsigma}(g')_j \nmr^2 \nonumber \\ & \hspace{0.5 in} =  (1-\epsilon)\nml \sum_{g' \in \mathbb{B}_{r,\trir}} \sum_{j=1}^d \beta(g')_j \Phi_{\mathsf{C}_\varsigma}(g')_j \nmr^2 + \epsilon \nml \sum_{g' \in \mathbb{B}_{r,\trir}} \sum_{j=1}^d \beta(g')_j \Phi_{\Delta_r}(g')_j \nmr^2 \nonumber \\ & \hspace{0.5 in} \geq \epsilon \nml \sum_{g' \in \mathbb{B}_{r,\trir}} \sum_{j=1}^d \beta(g')_j \Phi_{\Delta_r}(g')_j \nmr^2 \label{eq.hungry-1} \\ &\hspace{0.5 in} = \epsilon\left( \sum_{g' \in \mathbb{B}_{r,\trir}} \sum_{j=1}^d |\beta(g')_j|^2 \right) \label{eq.ell2} \end{align} Here, (\ref{eq.ell2}) follows from (\ref{eq.hungry-1}) since the set \[ \{\Phi_{\Delta_r}(g')_j:g' \in \mathbb{B}_{r,\trir},j \in [d]\} \] is orthonormal.

\subsubsection{Establishing bounds on transport operators}

\begin{proposition} \label{prop.1-1} We have $\mathfrak{e}(\mathsf{D}_\varsigma,\mathsf{D}_\varrho) \leq L_{r,\epsilon}$ for all $\varsigma,\varrho \in \mathrm{Sym}(d)$. \end{proposition}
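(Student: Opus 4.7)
The plan is to bound $\mathfrak{e}(\mathsf{D}_\varsigma,\mathsf{D}_\varrho) = \nml t[\mathsf{D}_\varsigma,\mathsf{D}_\varrho]\nmr_{\mathrm{op}}^2$ by establishing matching estimates on the realization quadratic forms on each side. The lower bound on the $\mathsf{D}_\varsigma$ side is already in hand: inequality (\ref{eq.ell2}) gives
\[ \nml \sum_{g',j} \beta(g')_j \Phi_{\mathsf{D}_\varsigma}(g')_j \nmr^2 \geq \epsilon \sum_{g',j} |\beta(g')_j|^2 \]
for every $\beta:\mathbb{B}_{r,\trir} \to \mathbb{C}^d$. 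So the real task is a matching upper bound on the $\mathsf{D}_\varrho$-squared-norm, after which the operator-norm comparison is immediate.

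For the upper bound I would split $\mathsf{D}_\varrho = (1-\epsilon)\mathsf{C}_\varrho + \epsilon \Delta_r$ and decompose the squared norm additively. The $\Delta_r$ piece contributes exactly $\epsilon \sum_{g',j}|\beta(g')_j|^2$ by orthonormality of $\{\Phi_{\Delta_r}(g')_j\}$. The $\mathsf{C}_\varrho$ piece is the main point, and the key observation is that (\ref{eq.1-1}) already exhibits $\mathsf{C}_\varrho$ as the Gram matrix of the vectors $\rho_\trir(g') x_{\varrho j} \in \mathscr{X}$, so I may take $\Phi_{\mathsf{C}_\varrho}(g')_j = \rho_\trir(g') x_{\varrho j}$ and write
\[ \nml \sum_{g',j} \beta(g')_j \Phi_{\mathsf{C}_\varrho}(g')_j \nmr^2 = \nml \sum_{g' \in \mathbb{B}_{r,\trir}} \rho_\trir(g') \Bigl(\sum_{j=1}^d \beta(g')_j x_{\varrho j}\Bigr) \nmr_\mathscr{X}^2. \]
By the triangle inequality in $\mathscr{X}$, the unitarity of $\rho_\trir(g')$, and the orthonormality of $x_{\varrho 1},\ldots,x_{\varrho d}$, the inner-sum norm collapses to $\bigl(\sum_j |\beta(g')_j|^2\bigr)^{1/2}$, and a further Cauchy--Schwarz over the $K_r$ elements of $\mathbb{B}_{r,\trir}$ yields $\nml \sum_{g',j} \beta(g')_j \Phi_{\mathsf{C}_\varrho}(g')_j \nmr^2 \leq K_r \sum_{g',j}|\beta(g')_j|^2$.

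Combining the pieces, the ratio of the two squared norms is at most $((1-\epsilon)K_r + \epsilon)/\epsilon \leq K_r/\epsilon$, yielding the desired bound $\mathfrak{e}(\mathsf{D}_\varsigma,\mathsf{D}_\varrho) \leq K_r/\epsilon = L_{r,\epsilon}^2$. There is no genuine obstacle here: the proof is triangle inequality plus two Cauchy--Schwarz applications, and the feature worth flagging is that the bound is uniform in $\varsigma$ and $\varrho$ because $\{x_{\varrho j}: j \in [d]\}$ is merely a permutation of the fixed orthonormal set $\{x_1, \ldots, x_d\}$, so the specific permutations never enter the estimates in any essential way.
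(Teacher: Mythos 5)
Your proof is correct and takes essentially the same route as the paper's: both combine the lower bound (\ref{eq.ell2}) on the $\mathsf{D}_\varsigma$ side with an upper bound on the $\mathsf{D}_\varrho$ side obtained from a triangle inequality over the $K_r$ elements of $\mathbb{B}_{r,\trir}$ followed by Cauchy--Schwarz; the paper merely bypasses your $(1-\epsilon)\mathsf{C}_\varrho + \epsilon\Delta_r$ splitting by observing that $\mathsf{D}_\varrho$ is normalized, so that for each fixed $g'$ the vectors $\Phi_{\mathsf{D}_\varrho}(g')_1,\ldots,\Phi_{\mathsf{D}_\varrho}(g')_d$ are already orthonormal and the inner sum collapses directly. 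Note also that the paper's own computation, like yours, establishes $\Vert t[\mathsf{D}_\varsigma,\mathsf{D}_\varrho]\Vert_{\mathrm{op}} \leq L_{r,\epsilon}$, i.e.\ $\mathfrak{e}(\mathsf{D}_\varsigma,\mathsf{D}_\varrho) \leq L_{r,\epsilon}^2 = K_r/\epsilon$, so the bound you call ``the desired bound'' is exactly what the paper proves, and the mismatch with the literal statement $\mathfrak{e}(\mathsf{D}_\varsigma,\mathsf{D}_\varrho) \leq L_{r,\epsilon}$ is a discrepancy in the paper's wording (energy versus operator norm), not a gap in your argument.
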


\begin{proof}[Proof of Proposition \ref{prop.1-1}] Let $\beta: \mathbb{B}_{r,\trir} \to \mathbb{C}^d$ be such that \[  y = \sum_{g' \in \mathbb{B}_{r,\trir}} \sum_{j=1}^d \beta(g')_j \Phi_{\mathsf{D}_\varsigma}(g')_j \] is a unit vector in $\mathscr{X}(\mathsf{D}_\varsigma)$. Thus from (\ref{eq.ell2}) we have \begin{equation} 1 \geq \epsilon \left( \sum_{g' \in \mathbb{B}_{r,\trir}} \sum_{j=1}^d|\beta(g')_j|^2 \right) \label{eq.money-55} \end{equation}

We compute \begin{align} || t[\mathsf{D}_\varsigma,\mathsf{D}_\varrho] y|| & = \nml  \sum_{g' \in \mathbb{B}_{r,\trir}} \sum_{j=1}^d \beta(g')_j \Phi_{\mathsf{D}_\varrho}(g')_j \nmr \nonumber \\ & \leq \sum_{g' \in \mathbb{B}_{r,\trir}} \nml \sum_{j=1}^d \beta(g')_j \Phi_{\mathsf{D}_\varrho}(g')_j \nmr \label{eq.2-2} \\ & = \sum_{g' \in \mathbb{B}_{r,\trir}} \left( \sum_{j=1}^d |\beta(g')_j|^2 \right)^{\frac{1}{2}} \label{eq.2-3} \\ & \leq \sqrt{K_r} \left( \sum_{g' \in \mathbb{B}_{r,\trir}} \sum_{j=1}^d|\beta(g')_j|^2 \right)^{\frac{1}{2}}  \label{eq.22-1} \\ & \leq \sqrt{\frac{K_r}{\epsilon}} \label{eq.22-2} \end{align} Here, (\ref{eq.2-3}) follows from (\ref{eq.2-2}) since $\mathsf{D}_\varrho$ is normalized and (\ref{eq.22-2}) follows from (\ref{eq.22-1}) by (\ref{eq.money-55}). \end{proof}

\begin{proposition} We have $\mathfrak{e}(\mathsf{D}_{\varsigma},\mathsf{D}_{\sigma(g)\varsigma}) \leq 1+2K_rR\delta$ for all $\varsigma \in \mathrm{Sym}(d)$ and all $g \in \mathbb{B}_{1,\tril}$. \label{prop.eggs} \end{proposition}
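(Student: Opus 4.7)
The plan is to bound the squared operator norm of $t[\mathsf{D}_\varsigma,\mathsf{D}_{\sigma(g)\varsigma}]$ by comparing $\|y\|^2$ and $\|t[\mathsf{D}_\varsigma,\mathsf{D}_{\sigma(g)\varsigma}]y\|^2$ for an arbitrary unit vector $y \in \mathscr{X}(\mathsf{D}_\varsigma)$. Writing $y = \sum_{g' \in \mathbb{B}_{r,\trir}}\sum_{j=1}^d \beta(g')_j \Phi_{\mathsf{D}_\varsigma}(g')_j$, both squared norms expand as double sums over $(g',h') \in \mathbb{B}_{r,\trir}^2$ of entries of $\mathsf{D}_\varsigma$ and $\mathsf{D}_{\sigma(g)\varsigma}$ respectively, weighted by $\overline{\beta(h')_k}\beta(g')_j$. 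Since $\Delta_r$ does not depend on the permutation, the $\epsilon\Delta_r$ contributions to the two squared norms are identical and cancel, so the difference $\|t[\mathsf{D}_\varsigma,\mathsf{D}_{\sigma(g)\varsigma}]y\|^2 - \|y\|^2$ is $(1-\epsilon)$ times the analogous expression with $\mathsf{C}_\varsigma$ and $\mathsf{C}_{\sigma(g)\varsigma}$ in place of $\mathsf{D}_\varsigma$ and $\mathsf{D}_{\sigma(g)\varsigma}$.

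The key step is to recognize that, after recollecting the $j$- and $k$-sums inside the inner products using (\ref{eq.1-1}), each $(g',h')$-term of this difference has exactly the form handled by Proposition \ref{prop.prox-ind}, with $\beta := \beta(g') \in \mathbb{C}^d$ and $\eta := \beta(h') \in \mathbb{C}^d$. Applying that proposition termwise yields
\[
\bigl| \|t[\mathsf{D}_\varsigma,\mathsf{D}_{\sigma(g)\varsigma}]y\|^2 - \|y\|^2 \bigr| \;\leq\; 2\delta \sum_{g',h' \in \mathbb{B}_{r,\trir}} \|\beta(g')\|_2 \|\beta(h')\|_2 \;=\; 2\delta \Biggl( \sum_{g' \in \mathbb{B}_{r,\trir}} \|\beta(g')\|_2 \Biggr)^{\!2}.
\]

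To close the argument I will apply Cauchy--Schwarz, which gives $\bigl(\sum_{g'} \|\beta(g')\|_2\bigr)^2 \leq K_r \sum_{g'} \|\beta(g')\|_2^2$, and then invoke the coercivity estimate (\ref{eq.ell2}), which states that $\|y\|^2 \geq \epsilon \sum_{g'} \|\beta(g')\|_2^2$. Since $\|y\| = 1$ we get $\sum_{g'} \|\beta(g')\|_2^2 \leq 1/\epsilon$, so the difference of squared norms is bounded by $2K_r\delta/\epsilon$, matching the advertised bound $2K_r R\delta$ once the paper's $R$ is identified with $1/\epsilon$. Taking the supremum over unit vectors $y$ then yields the claimed inequality for $\mathfrak{e}(\mathsf{D}_\varsigma,\mathsf{D}_{\sigma(g)\varsigma})$.

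I do not expect a genuine obstacle here: Proposition \ref{prop.prox-ind} was tailor-made to supply the needed termwise estimate, and the role of the regularization $\epsilon\Delta_r$ — for which inequality (\ref{eq.ell2}) was established — is precisely to control $\sum_{g'} \|\beta(g')\|_2^2$ by $\|y\|^2/\epsilon$, thereby preventing the $(g',h')$-sum from blowing up. The only delicate point is bookkeeping the factor of $(1-\epsilon) \leq 1$ and absorbing it into the final constant.
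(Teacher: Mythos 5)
Your proposal is correct and follows essentially the same route as the paper: expand $\|t[\mathsf{D}_\varsigma,\mathsf{D}_{\sigma(g)\varsigma}]y\|^2 - \|y\|^2$ as a double sum over $(g',h')$, note the $\epsilon\Delta_r$ parts cancel leaving a factor $(1-\epsilon)\le 1$ times the $\mathsf{C}$-difference, apply Proposition \ref{prop.prox-ind} termwise, and finish with Cauchy--Schwarz over $\mathbb{B}_{r,\trir}$ together with the coercivity bound (\ref{eq.ell2}). Your identification of the paper's $R$ with $1/\epsilon$ is exactly the (implicit, notationally inconsistent) convention the paper itself uses in its proof, so no substantive difference remains.
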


\begin{proof}[Proof of Proposition \ref{prop.eggs}] Let $\varsigma \in \mathrm{Sym}(d)$ and $g \in \mathbb{B}_{1,\tril}$ Let $\beta: \mathbb{B}_{r,\trir} \to \mathbb{C}^d$ be such that if we write \[ y = \sum_{g' \in \mathbb{B}_{r,\trir}} \sum_{j=1}^d \beta(g')_j \Phi_{\mathsf{D}_\varsigma}(g')_j \] then $y$ is a unit vector in $\mathscr{X}(\mathsf{D}_\varsigma)$. Thus from (\ref{eq.ell2}) we have  \begin{equation} 1 \geq \frac{1}{R} \left( \sum_{g' \in \mathbb{B}_{r,\trir}} \sum_{j=1}^d|\beta(g')_j|^2 \right) \label{eq.money-56} \end{equation}
We compute 

\begin{align} \bigl \vert ||t[\mathsf{D}_\varsigma&,\mathsf{D}_{\sigma(g)\varsigma}] y||^2 -1 \bigr \vert \nonumber \\ & = \Biggl \vert  \left \langle \sum_{g' \in \mathbb{B}_{r,\trir}} \sum_{j=1}^d \beta(g')_j \Phi_{\mathsf{D}_{\sigma(g)\varsigma}}(g')_j, \sum_{h' \in \mathbb{B}_{r,\trir}} \sum_{k=1}^d \beta(h')_k \Phi_{\mathsf{D}_{\sigma(g) \varsigma}}(h')_k \right \rangle \nonumber \\ & \hspace{1 in} -  \left \langle \sum_{g' \in \mathbb{B}_{r,\trir}} \sum_{j=1}^d \beta(g')_j \Phi_{\mathsf{D}_{\varsigma}}(g')_j, \sum_{h' \in \mathbb{B}_{r,\trir}} \sum_{k=1}^d \beta(h')_k \Phi_{\mathsf{D}_{\varsigma}}(h')_k \right \rangle \Biggr \vert \label{eq.mari-1} \\ & =  \Biggl \vert  \sum_{g',h' \in \mathbb{B}_{r,\trir}}  \sum_{j,k=1}^d \beta(g')_j \ov{\beta(h')_k}\mathsf{D}_{\sigma(g) \varsigma} \bigl( (h')^{-1}g'\bigr)_{j,k} \nonumber \\ & \hspace{1 in} - \sum_{g',h' \in \mathbb{B}_{r,\trir}}  \sum_{j,k=1}^d \beta(g')_j \ov{\beta(h')_k}  \mathsf{D}_{\varsigma}\bigl((h')^{-1}g'\bigr)_{j,k} \Biggr \vert \label{eq.mari-2} \\ & \leq \sum_{g',h' \in \mathbb{B}_{r,\trir}}   \left \vert \sum_{j,k=1}^d \beta(g')_j \ov{\beta(h')_k} \left( \mathsf{D}_{\sigma(g) \varsigma} \bigl( (h')^{-1}g'\bigr)_{j,k} - \mathsf{D}_{\varsigma}\bigl((h')^{-1}g'\bigr)_{j,k} \right) \right \vert \label{eq.3-1}  \\ & = \left(1-\frac{1}{R}\right)\sum_{g',h' \in \mathbb{B}_{r,\trir}}  \left \vert \sum_{j,k=1}^d \beta(g')_j \ov{\beta(h')_k} \left( \mathsf{C}_{\sigma(g) \varsigma} \bigl( (h')^{-1}g'\bigr)_{j,k} - \mathsf{C}_{\varsigma}\bigl((h')^{-1}g'\bigr)_{j,k} \right) \right \vert \label{eq.3-2} \\ & \leq \sum_{g',h' \in \mathbb{B}_{r,\trir}}  \left \vert \sum_{j,k=1}^d \beta(g')_j \ov{\beta(h')_k} \left( \mathsf{C}_{\sigma(g) \varsigma} \bigl( (h')^{-1}g'\bigr)_{j,k} - \mathsf{C}_{\varsigma}\bigl((h')^{-1}g'\bigr)_{j,k} \right) \right \vert \label{eq.3-87}  \\ & = \sum_{g',h' \in \mathbb{B}_{r,\trir}} \biggl \vert \sum_{j,k=1}^d \beta(g')_j \ov{\beta(h')_k} \bigl(\langle \rho_\trir(g') x_{\sigma(g)\varsigma j},\rho_\trir(h') x_{\sigma(g) \varsigma k} \rangle -\langle \rho_\trir(g') x_{\varsigma j},\rho_\trir(h') x_{\varsigma k} \rangle \bigr) \biggr \vert \label{eq.3-3} \\ & = \sum_{g',h' \in \mathbb{B}_{r,\trir}} \Biggl \vert  \Biggl( \left \langle \rho_\trir(g')   \sum_{j=1}^d \beta(g')_j x_{\sigma(g)\varsigma j} , \rho_\trir(h')  \sum_{k=1}^d \beta(h')_k x_{\sigma(g) \varsigma k} \right \rangle \nonumber \\ & \hspace{1 in} -  \left \langle \rho_\trir(g')   \sum_{j=1}^d \beta(g')_j x_{\varsigma j} , \rho_\trir(h')  \sum_{k=1}^d \beta(h')_k x_{\varsigma k} \right \rangle  \Biggr) \Biggr \vert \label{eq.3-4}  \\ & \leq 2\delta \sum_{g',h' \in \mathbb{B}_{r,\trir}} \left( \sum_{j=1}^d |\beta(g')_j|^2 \right)^\frac{1}{2} \left( \sum_{k=1}^d |\beta(h')_k|^2 \right) ^{\frac{1}{2}} \label{eq.3-5} \\ & \leq 2K_r\delta \left( \sum_{g' \in \mathbb{B}_{r,\trir}} \sum_{j=1}^d |\beta(g')_j|^2 \right) \label{eq.10-1}  \\ & \leq 2K_rR\delta \label{eq.10-2} \end{align}

Here, \begin{itemize} \item (\ref{eq.mari-2}) follows from (\ref{eq.mari-1}) using (\ref{eq.mari-0}) \item (\ref{eq.3-2}) follows from (\ref{eq.3-1}) since the $\Delta_r$ components in the definitions of $\mathsf{D}_\varsigma$ and $\mathsf{D}_{\sigma(g) \varsigma}$ cancel, \item (\ref{eq.3-3}) follows from (\ref{eq.3-87}) by (\ref{eq.1-1}) \item  (\ref{eq.3-5}) follows from (\ref{eq.3-4}) by Proposition \ref{prop.prox-ind} \item and (\ref{eq.10-2}) follows from (\ref{eq.10-1}) by (\ref{eq.money-56}). \end{itemize} 

\end{proof}

 \subsubsection{Constructing a representation through permutations}

Apply Conjecture \ref{lem.nogain} to the positive definite functions $(\mathsf{D}_v)_{v \in V}$  to obtain positive definite functions $(\hat{\mathsf{D}}_v)_{v \in V}$ such that $\mathsf{D}_v = \hat{\mathsf{D}}_v \rest \mathbb{B}_r$ and such that $\mathfrak{e}(\mathsf{D}_v,\mathsf{D}_w) = \mathfrak{e}(\hat{\mathsf{D}}_v,\hat{\mathsf{D}}_w)$ for all $v,w \in V$. We regard each $\hat{\mathsf{D}}_v$ as a function from $\mathbb{F}_\trir$ to $\mathrm{Mat}_{d \times d}(\mathbb{C})$. \\
\\
Write $\mathscr{Y}_v$ for $\mathscr{X}(\mathsf{D}_v)$ and let $\zeta_{\trir,v}:\mathbb{F}_\trir \to \mathrm{U}(\mathscr{Y}_v)$ be the associated representation of $\mathsf{D}_v$. Define $\mathscr{Y} = \bigoplus_{v \in V} \mathscr{Y}_v$ and $\zeta_\trir = \bigoplus_{v \in V} \zeta_{\trir,v}$. Define a representation $\theta:\mathbb{F}_\tril \to \mathrm{GL}(\mathscr{Y})$ by setting \[ \theta(g)  = \bigoplus_{v \in V} t[\mathsf{D}_v,\mathsf{D}_{\sigma(g)v}] \] Note that $\theta$ factors through the finite group $\Gamma = \sigma(\mathbb{F}_\tril)$. Moreover, we have \[ t[\mathsf{D}_v,\mathsf{D}_{\sigma(g)v}]\zeta_{\trir.v} = \zeta_{\trir,\tau(g)v}t[\mathsf{D}_v,\mathsf{D}_{\sigma(g)v}] \] for all $v \in V$ and $g \in \mathbb{F}_\tril$. Therefore $\theta$ commutes with $\zeta_\trir$ so that $\theta \times \zeta_\trir$ is a half finite linear representation of $G$. \\
\\
From Proposition \ref{prop.1-1} we see \begin{equation} ||\theta(g)|| \leq L_{r,\epsilon} \label{eq.you} \end{equation} for all $g \in \mathbb{F}_\tril$ and from Proposition \ref{prop.eggs} we see that \begin{equation} \label{eq.becky-1} ||\theta(g)||_{\mathrm{op}} \leq 1+2K_r\delta \end{equation} for all $g \in \mathbb{B}_{r,\tril}$

\subsection{Repairing the representation to be unitary}

\subsubsection{Conjugation by an average} \label{segment.repair0}

In Segments \ref{segment.repair0} and \ref{segment.spectrum} we regard $\theta$ as a representation of the finite group $\Gamma$. Define a positive operator $q \in \mathrm{B}(\mathscr{Y})$ by \[ q= \frac{1}{|\Gamma|} \sum_{\gamma \in \Gamma} \theta(\gamma)^\ast \theta(\gamma). \] 

By applying (\ref{eq.becky-3}) to $g^{-1}$ we see that each $\theta(\gamma)$ is invertible. Hence each operator $\theta(\gamma)^\ast\theta(\gamma)$ is strictly positive and so $q$ is invertible. Define a representation $\zeta_\tril$ of $\Gamma$ on $\mathscr{Y}$ by setting $\zeta_\tril(\gamma) = q^\frac{1}{2} \theta(\gamma) q^{-\frac{1}{2}}$. For all $\gamma \in \Gamma$ and all $g' \in \mathbb{F}_\trir$ we have that $\zeta_\trir(g')$ commutes with each $\theta(\gamma)$. Since $\zeta_\trir(g')$ is unitary, this implies that $\zeta_\trir(g')$ commutes with $\theta(\gamma)^\ast$ and hence $\zeta_\trir(g')$ commutes with $q$. Therefore $\zeta_\trir$ commutes with $\zeta_\tril$ and so if we set $\zeta = \zeta_\tril \times \zeta_\trir$ then $\zeta$ is a half finite linear representation of $G$. \\
\\
We claim that $\zeta$ is in fact unitary. Write $I$ for the identity operator on $\mathscr{Y}$. For $\gamma \in \Gamma$ we have \begin{align} \zeta_\tril(\gamma)^\ast \zeta_\tril(\gamma) &= \bigl( q^{\frac{1}{2}} \theta(\gamma) q^{-\frac{1}{2}}\bigr)^\ast \bigl( q^{\frac{1}{2}} \theta(\gamma) q^{-\frac{1}{2}} \bigr)  \nonumber \\ & = q^{-\frac{1}{2}} \theta(\gamma)^\ast q \theta(\gamma) q^{-\frac{1}{2}}  \nonumber \\ & = q^{-\frac{1}{2}} \theta(\gamma)^\ast \left( \frac{1}{|\Gamma|} \sum_{\nu \in \Gamma} \theta(\nu)^\ast \theta(\nu) \right)   \theta(\gamma) q^{-\frac{1}{2}}  \nonumber \\ & = q^{-\frac{1}{2}} \left( \frac{1}{|\Gamma|} \sum_{\nu \in \Gamma} \theta(\gamma)^\ast \theta(\nu)^\ast \theta(\nu) \theta(\gamma) \right) q^{-\frac{1}{2}} \nonumber \\ & = q^{-\frac{1}{2}} \left( \frac{1}{|\Gamma|} \sum_{\nu \in \Gamma} \theta(\nu \gamma)^\ast \theta(\nu \gamma) \right) q^{-\frac{1}{2}}  \nonumber \\ & = q^{-\frac{1}{2}} \left(\frac{1}{|\Gamma|} \sum_{\nu \in \Gamma} \theta(\nu)^\ast \theta(\nu) \right) q^{-\frac{1}{2}} \nonumber \\ & = I \label{eq.6-1} \end{align} so that $\zeta_\tril(\gamma)$ is unitary and therefore $\zeta$ is a unitary representation. 

\subsubsection{Bounding the spectrum of the average} \label{segment.spectrum}

\begin{proposition} We have $\mathrm{spec}(q) \subseteq [L_{r,R}^{-2},L_{r,R}^2]$. \label{prop.spec} \end{proposition}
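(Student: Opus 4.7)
The plan is to prove the two inclusions separately, the upper bound by a direct averaging argument and the lower bound by invoking the invertibility of each $\theta(\gamma)$ together with the uniform bound from Proposition \ref{prop.1-1}.

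For the upper bound, observe that $q$ is a positive self-adjoint operator, so its spectrum is contained in $[0,\|q\|_{\mathrm{op}}]$. Since each summand $\theta(\gamma)^\ast \theta(\gamma)$ is positive, the triangle inequality gives
\[ \|q\|_{\mathrm{op}} \leq \frac{1}{|\Gamma|} \sum_{\gamma \in \Gamma} \|\theta(\gamma)^\ast \theta(\gamma)\|_{\mathrm{op}} = \frac{1}{|\Gamma|} \sum_{\gamma \in \Gamma} \|\theta(\gamma)\|_{\mathrm{op}}^2. \]
By (\ref{eq.you}), each $\|\theta(\gamma)\|_{\mathrm{op}} \leq L_{r,\epsilon}$, so $\|q\|_{\mathrm{op}} \leq L_{r,\epsilon}^2$, giving the upper endpoint of the spectral interval.

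For the lower bound, since $\theta$ factors through the finite group $\Gamma$, each $\theta(\gamma)$ is invertible with inverse $\theta(\gamma^{-1})$. Applying (\ref{eq.you}) again yields $\|\theta(\gamma)^{-1}\|_{\mathrm{op}} = \|\theta(\gamma^{-1})\|_{\mathrm{op}} \leq L_{r,\epsilon}$. For any unit vector $v \in \mathscr{Y}$, this means
\[ \|v\| = \|\theta(\gamma)^{-1} \theta(\gamma) v\| \leq L_{r,\epsilon} \|\theta(\gamma) v\|, \]
so that $\langle \theta(\gamma)^\ast \theta(\gamma) v, v \rangle = \|\theta(\gamma) v\|^2 \geq L_{r,\epsilon}^{-2}$. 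Averaging over $\gamma \in \Gamma$ yields $\langle q v, v \rangle \geq L_{r,\epsilon}^{-2}$ for every unit vector $v$, and since $q$ is self-adjoint this gives $q \geq L_{r,\epsilon}^{-2} I$ in the operator order. Hence $\mathrm{spec}(q) \subseteq [L_{r,\epsilon}^{-2}, L_{r,\epsilon}^2]$.

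There is no real obstacle here: the entire statement is a routine extraction of operator norm bounds, and the only slightly subtle point is recognizing that a uniform upper bound on $\|\theta(\gamma)\|_{\mathrm{op}}$ over the finite group $\Gamma$ automatically yields a uniform upper bound on $\|\theta(\gamma)^{-1}\|_{\mathrm{op}}$ via $\theta(\gamma)^{-1} = \theta(\gamma^{-1})$. Once that is observed, both inclusions are immediate from Proposition \ref{prop.1-1}.
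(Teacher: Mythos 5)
Your proof is correct and follows essentially the same route as the paper: both bound the quadratic form $\langle qy,y\rangle$ above by $L^2$ using the uniform norm bound on $\theta(\gamma)$, and below by $L^{-2}$ using $\theta(\gamma)^{-1}=\theta(\gamma^{-1})$, then pass to the spectrum via self-adjointness. The only (cosmetic) difference is that you conclude from the operator ordering $L^{-2}I\leq q\leq L^{2}I$, whereas the paper runs a Weyl approximate-eigenvector sequence; these are interchangeable standard steps.
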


\begin{proof}[Proof of Proposition \ref{prop.spec}] Using (\ref{eq.becky-3}) we see that for any unit vector $y \in \mathscr{Y}$ we have \begin{align} \langle q y,y \rangle & = \frac{1}{|\Gamma|} \sum_{\gamma \in \Gamma} \bigl \langle \theta(\gamma)^\ast \theta(\gamma) y,y \bigr \rangle \nonumber \\ & = \frac{1}{|\Gamma|} \sum_{\gamma \in \Gamma} ||\theta(\gamma) y||^2 \nonumber \\ & \leq L_{r,R}^2 \label{eq.repair1} \end{align} By applying (\ref{eq.becky-3}) to $g^{-1}$ we see that \[ \inf \bigl \{ ||\theta(\gamma) y||^2 : y \in \mathscr{Y} \mbox{ is a unit vector} \bigr \} \geq \frac{1}{L_{r,R}^2} \] and so \begin{equation} \label{eq.repair2}  \inf \bigl \{ \langle q y,y \rangle :y \in \mathscr{Y} \mbox{ is a unit vector} \bigr \} \geq \frac{1}{L_{r,R}^2}. \end{equation}  Now suppose $\lambda \in \mathrm{spec}(q)$. Since $q$ is self-adjoint, there exists a sequence $(y_n)_{n=1}^\infty$ of unit vectors in $\mathscr{Y}$ such that $\lim_{n \to \infty} ||(q -  \lambda I) y_n|| = 0$. This implies that  $\lim_{n \to \infty}  \langle (q-\lambda I) y_n,y_n \rangle = 0$ and so $\lim_{n \to \infty} \langle q y_n,y_n \rangle = \lambda$. Thus from (\ref{eq.repair1}) and (\ref{eq.repair2}) we have $L_{r,R}^{-2} \leq \lambda \leq L_{r,R}^2$. \end{proof} 

\subsubsection{Estimating the distance to the repaired representation} \label{segment.repair}

\begin{proposition} Suppose $g \in \mathbb{B}_{r,\tril}$. Then $||\zeta_\tril(g) - \theta(g)||_{\mathrm{op}} \leq R^{-1}$. \label{prop.repair1} \end{proposition}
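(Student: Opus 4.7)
The plan is to exploit three ingredients: $\zeta_\tril$ and $\theta$ are both group homomorphisms of $\mathbb{F}_\tril$, the identity $\theta(g)^\ast q\, \theta(g) = q$ holds for every $g \in \mathbb{F}_\tril$ by reindexing the sum defining $q$, and $\theta(g)$ is nearly unitary when $g \in \mathbb{B}_{1,\tril}$ by Proposition \ref{prop.eggs}.

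First I would reduce to the case of a single generator by a telescoping argument. Writing $g = g_1 \cdots g_n$ with each $g_i \in \mathbb{B}_{1,\tril}$ and $n \leq r$, the identity
\begin{equation*}
\zeta_\tril(g) - \theta(g) = \sum_{i=1}^n \zeta_\tril(g_1 \cdots g_{i-1})\bigl(\zeta_\tril(g_i) - \theta(g_i)\bigr)\theta(g_{i+1}\cdots g_n),
\end{equation*}
together with the unitarity of $\zeta_\tril$ and the bound (\ref{eq.becky-1}) on $||\theta(\cdot)||_{\mathrm{op}}$ on $\mathbb{B}_{r,\tril}$, reduces the problem to estimating $||\zeta_\tril(g') - \theta(g')||_{\mathrm{op}}$ for $g' \in \mathbb{B}_{1,\tril}$, at the cost of a harmless factor of order $r$.

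For such a generator $g'$, I would rewrite the difference as a commutator,
\begin{equation*}
\zeta_\tril(g') - \theta(g') = q^{1/2}\theta(g') q^{-1/2} - q^{1/2}q^{-1/2}\theta(g') = q^{1/2}[\theta(g'),q^{-1/2}],
\end{equation*}
so that Proposition \ref{prop.spec} yields $||\zeta_\tril(g') - \theta(g')||_{\mathrm{op}} \leq L_{r,R}\,||[\theta(g'),q^{-1/2}]||_{\mathrm{op}}$. To pass from a commutator with $q^{-1/2}$ to one with $q$ itself, I would invoke the integral representation
\begin{equation*}
q^{-1/2} = \frac{1}{\pi}\int_0^\infty t^{-1/2}(q+t)^{-1}\,\dee t,
\end{equation*}
which gives $[\theta(g'),q^{-1/2}] = -\tfrac{1}{\pi}\int_0^\infty t^{-1/2}(q+t)^{-1}[\theta(g'),q](q+t)^{-1}\,\dee t$, and the spectral bound from Proposition \ref{prop.spec} controls $||(q+t)^{-1}||_{\mathrm{op}}$ by $(L_{r,R}^{-2}+t)^{-1}$, making the integral converge to a polynomial factor in $L_{r,R}$ times $||[\theta(g'),q]||_{\mathrm{op}}$.

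Finally, the invariance $\theta(g)^\ast q \theta(g) = q$ is equivalent to $q\theta(g) = \theta(g)^{-\ast}q$, so $[q,\theta(g')] = (\theta(g')^{-\ast} - \theta(g'))q$, and rewriting $\theta(g')^{-\ast} - \theta(g') = \theta(g')^{-\ast}(I - \theta(g')^\ast\theta(g'))$ turns the problem into estimating the deviation of $\theta(g')^\ast\theta(g')$ from the identity. Proposition \ref{prop.eggs}, applied both to $g'$ and $(g')^{-1}$, forces the spectrum of $\theta(g')^\ast\theta(g')$ into an $O(K_rR\delta)$-neighbourhood of $1$, so this deviation is of order $K_rR\delta$. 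Multiplying together the contributions from the three steps, the smallness condition (\ref{eq.numerical-1}) on $\delta$ absorbs the accumulated powers of $L_{r,R}$ and $K_r$ and delivers the bound $R^{-1}$. The main obstacle I anticipate is the passage from $[q,\theta(g')]$ to $[q^{1/2},\theta(g')]$, which is where the integral functional calculus must be used, and where bookkeeping of the powers of $L_{r,R}$ is most delicate.
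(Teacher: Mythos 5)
Your proposal is correct and follows essentially the same route as the paper: both arguments reduce the problem to the commutator of $\theta(g)$ with a square root of $q$, bound the commutator $[q,\theta(g)]$ by combining the averaging identity $\theta(g)^\ast q\,\theta(g)=q$ with the near-unitarity of $\theta(g)$ from Proposition \ref{prop.eggs}, pass to the square root via a resolvent-commutator estimate controlled by the spectral bounds of Proposition \ref{prop.spec}, and absorb the accumulated constants using the smallness condition (\ref{eq.numerical-1}). Your two deviations are cosmetic: the paper works directly with $g\in\mathbb{B}_{r,\tril}$ instead of telescoping over generators, and it implements the functional-calculus step with a contour integral for $q^{1/2}$ rather than your real integral representation of $q^{-1/2}$, which yields comparable constants.
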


\begin{proof}[Proof of Proposition \ref{prop.repair1}] Fix $g \in \mathbb{B}_{r,\tril}$. By applying (\ref{eq.becky-1}) to $g$ and $g^{-1}$ we see \[ \frac{1}{1+2K_r\delta}I \leq \theta(g)^\ast \theta(g) \leq  (1+2K_r\delta)I \] Since $\theta(g)^\ast\theta(g)$ is unitarily conjugate to $\theta(g)\theta(g)^\ast$ we obtain\[  \frac{1}{1+2K_r\delta}I \leq \theta(g) \theta(g)^\ast \leq  (1+2K_r\delta)I \] so that \begin{equation} ||\theta(g) \theta(g)^\ast - I ||_{\mathrm{op}} \leq 2K_r\delta\label{eq.becky-13} \end{equation} Since $q^{-\frac{1}{2}}\theta(g)^\ast q \theta(g) q^{-\frac{1}{2}} = I$ we have $\theta(g)^\ast q \theta(g) = q$. Therefore \begin{align} ||q \theta(g) - \theta(g)q||_{\mathrm{op}}& = ||q\theta(g) -  \theta(g)\theta(g)^\ast q \theta(g)||_{\mathrm{op}} \nonumber  \\ & \leq || I - \theta(g) \theta(g)^\ast||_{\mathrm{op}} ||q||_{\mathrm{op}} ||\theta(g)||_{\mathrm{op}} \label{eq.repair-1-1} \\ &  \leq 2 || I - \theta(g) \theta(g)^\ast||_{\mathrm{op}} ||q||_{\mathrm{op}} \label{eq.repair-1-1.5} \\ & \leq 2L_{r,R}^2 ||I-\theta(g)\theta(g)^\ast||_{\mathrm{op}} \label{eq.repair-1-2} \\ & \leq 4L_{r,R}^2K_r\delta\label{eq.repair-1-3}  \end{align} 

Here, \begin{itemize} \item (\ref{eq.repair-1-1.5}) follows from (\ref{eq.repair-1-1}) by (\ref{eq.becky-1}) since $e^{2s_\delta} \leq 2$, \item (\ref{eq.repair-1-2}) follows from (\ref{eq.repair-1-1.5}) by Proposition \ref{prop.spec} since $q$ is self-adjoint, \item and (\ref{eq.repair-1-3}) follows from (\ref{eq.repair-1-2}) by (\ref{eq.becky-13}). \end{itemize}

Let $z \in \mathbb{C} \setminus \mathrm{spec}(q)$. We compute \begin{align}& ||(q - zI)^{-1} \theta(g) - \theta(g)(q-zI)^{-1}||_{\mathrm{op}} \nonumber \\ & =||(q - zI)^{-1} \theta(g) - (q - zI)^{-1}(q-zI) \theta(g)(q-zI)^{-1}||_{\mathrm{op}} \nonumber \\ & \leq ||(q-zI)^{-1}||_{\mathrm{op}} || \theta(g) - (q-zI) \theta(g)(q-zI)^{-1}||_{\mathrm{op}} \nonumber \\ & = \frac{1}{\mathrm{dist}(z,\mathrm{spec}(q))} || \theta(g) - (q-zI)\theta(g)(q-zI)^{-1}||_{\mathrm{op}} \nonumber \\ & = \frac{1}{\mathrm{dist}(z,\mathrm{spec}(q))}  \Bigl \vert \Bigl \vert \theta(g) - q\theta(g)(q-zI)^{-1} + z\theta(g)(q-zI)^{-1} \Bigr \vert \Bigr \vert_{\mathrm{op}} \nonumber  \\ & = \frac{1}{\mathrm{dist}(z,\mathrm{spec}(q))}  \Bigl \vert \Bigl \vert \theta(g) - \theta(g)q(q-zI)^{-1} + z\theta(g)(q-zI)^{-1}  \nonumber \\ & \hspace{2 in} +\theta(g)q(q-zI)^{-1} - q\theta(g)(q-zI)^{-1}\Bigr \vert \Bigr \vert_{\mathrm{op}} \nonumber  \\ & = \frac{1}{\mathrm{dist}(z,\mathrm{spec}(q))}  \Bigl \vert \Bigl \vert \theta(g)  - \theta(g)(q-zI)(q-zI)^{-1}+ \theta(g)q(q-zI)^{-1} -q\theta(g)(q-zI)^{-1}\Bigr \vert \Bigr \vert_{\mathrm{op}} \nonumber  \\ & = \frac{1}{\mathrm{dist}(z,\mathrm{spec}(q))}  ||\theta(g)q(q-zI)^{-1} - q\theta(g)(q-zI)^{-1}||_{\mathrm{op}} \nonumber  \\ & \leq \frac{1}{\mathrm{dist}(z,\mathrm{spec}(q))}  ||\theta(g)q- q\theta(g)||_{\mathrm{op}} ||(q-zI)^{-1}||_{\mathrm{op}} \nonumber  \\ & = \frac{1}{\mathrm{dist}(z,\mathrm{spec}(q))^2}  ||\theta(g)q- q\theta(g)||_{\mathrm{op}}  \label{eq.text-1} \\ & \leq \frac{4L_{r,R}^2K_r\delta}{\mathrm{dist}(z,\mathrm{spec}(q))^2}  \label{eq.text-2} \end{align}

Here, (\ref{eq.text-2}) follows from (\ref{eq.text-1}) by (\ref{eq.repair-1-3}). Now, let $c:[0,1] \to \mathbb{C}$ be a simple closed contour with the following properties. \begin{enumerate}[label=(\roman*)] \item \label{contour-1} We have $\mathrm{Re}(c(x)) > 0$ for all $x \in [0,1]$. \item \label{contour-2} The interval $[L_{r,R}^{-2},L_{r,R}^2]$ is enclosed by $c$. \item \label{contour-3} We have $\mathrm{dist}(c(x),[L_{r,R}^{-2},L_{r,R}^2]) \geq \frac{1}{2}L_{r,R}^{-2}$ for all $x \in [0,1]$. \item \label{contour-4} We have $\sup \{ |c(x)|: x \in [0,1] \} \leq 2L_{r,R}^2$  \item \label{contour-5} We have $\mathrm{\ell}(c) \leq 10L_{r,R}^2$ where $\ell(c)$ denotes the length of $c$. \end{enumerate} 

By Clause \ref{contour-1} we can consistently define a square root function on the image of $c$. Proposition \ref{prop.spec} together with Clause \ref{contour-2} in the definition of $c$ implies that $c$ encloses $\mathrm{spec}(q)$. Therefore we can use the holomorphic functional calculus to make the following computation.
 
 \begin{align} ||\theta(g)q^{\frac{1}{2}} &- q^{\frac{1}{2}}\theta(g)||_{\mathrm{op}} \nonumber \\ & = \frac{1}{2 \pi} \nml \theta(g) \left( \int_0^1 c(x)^{\frac{1}{2}}(c(x)I - q)^{-1} \deee x \right) - \left( \int_0^1 c(x)^{\frac{1}{2}}(c(x)I - q)^{-1} \deee x \right) \theta(g) \nmr_{\mathrm{op}} \nonumber \\ & = \frac{1}{2 \pi}  \nml \int_0^1 c(x)^{\frac{1}{2}} \Bigl(\theta(g)(c(x)I - q)^{-1} - (c(x)I - q)^{-1}\theta(g) \Bigr) \deee x  \nmr_{\mathrm{op}} \nonumber  \\ & \leq \frac{\ell(c)}{2\pi}  \sup_{0 \leq x \leq 1} \left( |c(x)|^{\frac{1}{2}} \Bigl \vert \Bigl \vert  \theta(g)(c(x)I - q)^{-1} - (c(x)I - q)^{-1}\theta(g) \Bigr \vert \Bigr \vert_{\mathrm{op}} \right) \label{eq.repair7}  \\ & \leq 10L_{r,R}^2 \sup_{0 \leq x \leq 1} \left( |c(x)|^{\frac{1}{2}} \Bigl \vert \Bigl \vert  \theta(g)(c(x)I - q)^{-1} - (c(x)I - q)^{-1}\theta(g) \Bigr \vert \Bigr \vert_{\mathrm{op}} \right) \label{eq.repair8} \\ & \leq 20L_{r,R}^3 \sup_{0 \leq x \leq 1} \Bigl \vert \Bigl \vert  \theta(g)(c(x)I - q)^{-1} - (c(x)I - q)^{-1}\theta(g) \Bigr \vert \Bigr \vert_{\mathrm{op}} \label{eq.repair9} \\ & \leq \frac{80L_{r,R}^5K_r\delta}{\mathrm{dist}(z,\mathrm{spec}(q))^2} \label{eq.repair10} \\ & \leq 320L_{r,R}^9K_r\delta \label{eq.repair11} \end{align}

Here, \begin{itemize} \item (\ref{eq.repair8}) follows from (\ref{eq.repair7}) by Clause \ref{contour-5} in the definition of $c$, \item (\ref{eq.repair9}) follows from (\ref{eq.repair8}) by Clause \ref{contour-4} in the definition of $c$, \item (\ref{eq.repair10}) follows from (\ref{eq.repair9}) by (\ref{eq.text-2}), \item and (\ref{eq.repair11}) follows from (\ref{eq.repair10}) by Clause \ref{contour-3} in the definition of $c$. \end{itemize}

Now, since $\mathrm{spec}(q) \subseteq [L_{r,R}^{-2},L_{r,R}^2]$, the spectral mapping theorem implies that $\mathrm{spec}(q^{-\frac{1}{2}}) \subseteq [L_{r,R}^{-1},L_{r,R}]$. Since $q^{-\frac{1}{2}}$ is self-adjoint, this implies $||q^{-\frac{1}{2}}||_{\mathrm{op}} \leq L_{r,R}$. Therefore \begin{align*} ||\zeta_\tril(g) - \theta(g)||_{\mathrm{op}} & = ||q^{\frac{1}{2}} \theta(g) q^{-\frac{1}{2}} - \theta(g)||_{\mathrm{op}} \\ & = ||q^{\frac{1}{2}} \theta(g) q^{-\frac{1}{2}} -  \theta(g)q^{\frac{1}{2}}q^{-\frac{1}{2}} ||_{\mathrm{op}} \\ & \leq  ||q^{\frac{1}{2}}\theta(g) - \theta(g)q^{\frac{1}{2}}||_{\mathrm{op}} ||q^{-\frac{1}{2}}||_{\mathrm{op}} \\ & \leq  320L_{r,R}^{10}K_r\delta \end{align*} Therefore Proposition \ref{prop.repair1} follows from (\ref{eq.numerical-1}) \end{proof}

\subsection{Finding a witness vector}

Define a vector $y \in \mathscr{Y}$ by setting \[ y = \frac{1}{d!} \bigoplus_{\varsigma \in \mathrm{Sym}(d)} \sum_{j=1}^d \alpha_{\varsigma j} \Phi_{\mathsf{D}_\varsigma}(e)_j \]   Since each $\mathsf{D}_\varsigma$ is normalized we have from (\ref{eq.alphal2}) that $y$ is a unit vector. Let $g \in \mathbb{B}_{r,\tril}$ and let $g' \in \mathbb{B}_{r,\trir}$. From Proposition \ref{prop.repair1} we have \begin{equation}  \left \langle \zeta(g,g') y, y \right \rangle  = \left \langle  \zeta_\tril(g) \zeta_\trir(g') y, y \right \rangle \approx_\epsilon \left \langle \theta(g)\zeta_\trir(g') y, y \right \rangle \label{eq.kwerpo} \end{equation} 

We have \begin{align} \left \langle \theta(g)\zeta_\trir(g') y, y \right \rangle  &= \frac{\epsilon}{d!}  \left \langle  \theta(g) \zeta_\trir(g')    \bigoplus_{\varsigma \in \mathrm{Sym}(d)} \sum_{j=1}^d \alpha_{\varsigma j} \Phi_{\hat{\mathsf{D}}_\circ}(e)_j , \bigoplus_{\varsigma \in \mathrm{Sym}(d)} \sum_{k=1}^d \alpha_{\varsigma k} \Phi_{\hat{\mathsf{D}}_\circ}(e)_k \right \rangle \nonumber \\ &+  \frac{1-\epsilon}{d!}  \left \langle  \theta(g) \zeta_\trir(g')    \bigoplus_{\varsigma \in \mathrm{Sym}(d)} \sum_{j=1}^d \alpha_{\varsigma j} \Phi_{\hat{\mathsf{D}}_\varsigma}(e)_j , \bigoplus_{\varsigma \in \mathrm{Sym}(d)} \sum_{k=1}^d \alpha_{\varsigma k} \Phi_{\hat{\mathsf{D}}_\varsigma}(e)_k \right \rangle \label{eq.bebe-1}  \end{align}

We have \begin{equation} \label{eq.bebe-2}  \frac{\epsilon}{d!} \left \vert \left \langle  \theta(g) \zeta_\trir(g')    \bigoplus_{\varsigma \in \mathrm{Sym}(d)} \sum_{j=1}^d \alpha_{\varsigma j} \Phi_{\hat{\mathsf{D}}_\circ}(e)_j , \bigoplus_{\varsigma \in \mathrm{Sym}(d)} \sum_{k=1}^d \alpha_{\varsigma k} \Phi_{\hat{\mathsf{D}}_\circ}(e)_k \right \rangle \right \vert \leq \epsilon \end{equation}

From (\ref{eq.kwerpo}), (\ref{eq.bebe-1}) and (\ref{eq.bebe-2}) we have

\begin{equation}   \left \langle \zeta(g,g') y, y \right \rangle \approx_\epsilon \frac{1}{d!}  \left \langle  \theta(g) \zeta_\trir(g')    \bigoplus_{\varsigma \in \mathrm{Sym}(d)} \sum_{j=1}^d \alpha_{\varsigma j} \Phi_{\hat{\mathsf{D}}_\varsigma}(e)_j , \bigoplus_{\varsigma \in \mathrm{Sym}(d)} \sum_{k=1}^d \alpha_{\varsigma k} \Phi_{\hat{\mathsf{D}}_\varsigma}(e)_k \right \rangle \label{eq.bebe-3} \end{equation} 

By construction we have \begin{equation} \zeta_\trir(g')  \Phi_{\hat{\mathsf{D}}_\varsigma}(e)_j = \Phi_{\hat{\mathsf{D}}_\varsigma}(g')_j \label{eq.28-3} \end{equation}

We have 

\begin{align} \frac{1}{d!} & \left \langle  \theta(g)   \bigoplus_{\varsigma \in \mathrm{Sym}(d)} \sum_{j=1}^d \alpha_{\varsigma j} \Phi_{\hat{\mathsf{D}}_\varsigma}(g')_j , \bigoplus_{\varsigma \in \mathrm{Sym}(d)} \sum_{k=1}^d \alpha_{\varsigma k} \Phi_{\hat{\mathsf{D}}_\varsigma}(e)_k \right \rangle \nonumber \\ &\hspace{1 in} =  \frac{1}{d!} \sum_{\varsigma \in \mathrm{Sym}(d)} \left \langle   \sum_{j=1}^d \alpha_{\tau(g)^{-1}\varsigma j} \Phi_{\hat{\mathsf{D}}_{\varsigma}}(g')_j , \sum_{k=1}^d \alpha_{\varsigma k} \Phi_{\hat{\mathsf{D}}_\varsigma}(e)_k \right \rangle \label{eq.26-1} \end{align}

From (\ref{eq.bebe-3}), (\ref{eq.28-3}) and (\ref{eq.26-1}) we obtain  

\begin{equation} \left \langle \zeta(g,g') y, y \right \rangle \approx_\epsilon  \frac{1}{d!}  \sum_{\varsigma \in \mathrm{Sym}(d)}  \sum_{j,k=1}^d \alpha_{\tau(g)^{-1}\varsigma j} \ov{ \alpha_{\varsigma k}} \left \langle  \Phi_{\hat{\mathsf{D}}_{\varsigma}}(g')_j , \Phi_{\hat{\mathsf{D}}_\varsigma}(e)_k  \right \rangle  \label{eq.28-7} \end{equation}

Since $g' \in \mathbb{B}_r$ from (\ref{eq.becky-5}) we have

\begin{equation}  \frac{1}{d!}  \sum_{\varsigma \in \mathrm{Sym}(d)}  \sum_{j,k=1}^d \alpha_{\sigma(g)^{-1}\varsigma j} \ov{ \alpha_{\varsigma k}} \hat{\mathsf{D}}_\varsigma(g')_{j,k} \approx_\epsilon  \frac{1}{d!}  \sum_{\varsigma \in \mathrm{Sym}(d)}  \sum_{j,k=1}^d \alpha_{\sigma(g)^{-1}\varsigma j} \ov{ \alpha_{\varsigma k}} \mathsf{D}_\varsigma(g')_{j,k} \label{eq.bebe-21} \end{equation}

From (\ref{eq.28-7}) and (\ref{eq.bebe-21}) we have

\begin{equation}  \langle \zeta(g,g') y, y  \rangle \approx_{2\epsilon} \frac{1}{d!}  \sum_{\varsigma \in \mathrm{Sym}(d)}  \sum_{j,k=1}^d \alpha_{\sigma(g)^{-1}\varsigma j} \ov{ \alpha_{\varsigma k}} \mathsf{D}_\varsigma(g')_{j,k} \label{eq.money-40} \end{equation}

From the construction of $\mathsf{D}$ we have

\begin{equation} \frac{1}{d!}  \sum_{\varsigma \in \mathrm{Sym}(d) \setminus B}  \sum_{j,k=1}^d \alpha_{\sigma(g)^{-1}\varsigma j} \ov{ \alpha_{\varsigma k}} \mathsf{D}_\varsigma(g')_{j,k} \approx_\epsilon \frac{1}{d!}  \sum_{\varsigma \in \mathrm{Sym}(d)}  \sum_{j,k=1}^d \alpha_{\sigma(g)^{-1}\varsigma j} \ov{ \alpha_{\varsigma k}} \mathsf{C}_\varsigma(g')_{j,k} \label{eq.bebe-24} \end{equation}

From (\ref{eq.money-40}) and (\ref{eq.bebe-24}) we have

\begin{equation}  \left \langle \zeta(g,g') y, y \right \rangle \approx_{3\epsilon} \frac{1}{d!}  \sum_{\varsigma \in \mathrm{Sym}(d) \setminus B}  \sum_{j,k=1}^d \alpha_{\sigma(g)^{-1}\varsigma j} \ov{ \alpha_{\varsigma k}} \mathsf{C}_\varsigma(g')_{j,k} \label{eq.bebe-25} \end{equation}

We have

\begin{align}&  \frac{1}{d!}  \sum_{\varsigma \in \mathrm{Sym}(d)}  \sum_{j,k=1}^d \alpha_{\sigma(g)^{-1}\varsigma j} \ov{ \alpha_{\varsigma k}} \mathsf{C}_\varsigma(g')_{j,k}  = \frac{1}{d!}  \sum_{\varsigma \in \mathrm{Sym}(d)}  \sum_{j,k=1}^d \alpha_{\sigma(g)^{-1}\varsigma j} \ov{ \alpha_{\varsigma k}}\left \langle \rho_\trir(g') x_{\varsigma j},x_{\varsigma k} \right \rangle \label{eq.money-41} \\ & \hspace{1 in} = \frac{1}{d!} \sum_{\varsigma \in \mathrm{Sym}(d)} \left \langle \rho_\trir(g') \sum_{j=1}^d \alpha_{\sigma(g)^{-1}\varsigma j} x_{\varsigma j}, \sum_{k=1}^d \alpha_{\varsigma k} x_{\varsigma k} \right \rangle \label{eq.bebe-28} \end{align}

where the equality in (\ref{eq.money-41}) holds by (\ref{eq.1-1}). From (\ref{eq.bebe-25}) and (\ref{eq.bebe-28}) we have

\begin{equation}   \langle \zeta(g,g') y, y \rangle \approx_{3\epsilon} \frac{1}{d!} \sum_{\varsigma \in \mathrm{Sym}(d)} \left \langle \rho_\trir(g') \sum_{j=1}^d \alpha_{\sigma(g)^{-1}\varsigma j} x_{\varsigma j}, \sum_{k=1}^d \alpha_{\varsigma k} x_{\varsigma k} \right \rangle \label{eq.money-50} \end{equation}

By making the changes of variables $j \mapsto \varsigma^{-1}\sigma(g)j$ in the left sum and $k \mapsto \varsigma^{-1}k$ in the right sum of (\ref{eq.money-50}) we obtain \[ (\ref{eq.money-50}) = \frac{1}{d!} \sum_{\varsigma \in \mathrm{Sym}(d)} \left \langle \rho_\trir(g') \sum_{j=1}^d \alpha_{j} x_{\sigma(g)j}, \sum_{k=1}^d \alpha_k x_k \right \rangle \]

or equivalently

\begin{equation} (\ref{eq.money-50})  =\left \langle \rho_\trir(g') \sum_{j=1}^d \alpha_{j} x_{\sigma(g)j}, \sum_{k=1}^d \alpha_k x_k \right \rangle \label{eq.bebe-29.5} \end{equation}

From (\ref{eq.money-50}) and (\ref{eq.bebe-29.5}) we obtain

\begin{equation}  \left \langle \zeta(g,g') y, y \right \rangle \approx_{3\epsilon} \left \langle \rho_\trir(g') \sum_{j=1}^d \alpha_{j} x_{\sigma(g)j}, \sum_{k=1}^d \alpha_k x_k \right \rangle \label{eq.bebe-30} \end{equation}

From (\ref{eq.uac-k}) we have 

\begin{equation} \left \langle \rho_\trir(g') \sum_{j=1}^d \alpha_{j} x_{\sigma(g)j}, \sum_{k=1}^d \alpha_k x_k \right \rangle= \left \langle \rho_\trir(g') \kappa(g)  \sum_{j=1}^d \alpha_{j} x_j, \sum_{k=1}^d \alpha_k x_k \right \rangle \label{eq.bebe-31} \end{equation}

From (\ref{eq.uac-g}) we have

\begin{equation} \left \langle \rho_\trir(g') \kappa(g)  \sum_{j=1}^d \alpha_{j} x_j, \sum_{k=1}^d \alpha_k x_k \right \rangle \approx_\epsilon \left \langle \rho_\trir(g') \rho_\tril(g)  \sum_{j=1}^d \alpha_{j} x_j, \sum_{k=1}^d \alpha_k x_k \right \rangle \label{eq.bebe-32} \end{equation}

From (\ref{eq.bebe-30}), (\ref{eq.bebe-31}) and (\ref{eq.bebe-32}) we have

\begin{equation}  \left \langle \zeta(g,g') y, y \right \rangle \approx_{4\epsilon} \left \langle \rho_\trir(g') \rho_\tril(g)  \sum_{j=1}^d \alpha_{j} x_j, \sum_{k=1}^d \alpha_k x_k \right \rangle \label{eq.bebe-33} \end{equation}

From (\ref{eq.uac-x}) and (\ref{eq.bebe-33}) we obtain

\[  \langle \zeta(g,g') y, y \rangle \approx_{5\epsilon} \langle \rho(g,g') x, x \rangle \]

This completes the proof of Conjecture \ref{thm.half}. In combination with the arguments of Subsection \ref{subsec.diablo} this completes the proof of Conjecture \ref{thm.CEC}.

\bibliographystyle{plain}
\bibliography{2019.12.20-conjecture}

\begin{thebibliography}{1}

\bibitem{MR2316876}
M.~Bakonyi and D.~Timotin.
\newblock Extensions of positive definite functions on free groups.
\newblock {\em J. Funct. Anal.}, 246(1):31--49, 2007.

\bibitem{MR2415834}
Bachir Bekka, Pierre de~la Harpe, and Alain Valette.
\newblock {\em Kazhdan's property ({T})}, volume~11 of {\em New Mathematical
  Monographs}.
\newblock Cambridge University Press, Cambridge, 2008.

\bibitem{MR2391387}
Nathanial~P. Brown and Narutaka Ozawa.
\newblock {\em {$C^*$}-algebras and finite-dimensional approximations},
  volume~88 of {\em Graduate Studies in Mathematics}.
\newblock American Mathematical Society, Providence, RI, 2008.

\bibitem{MR1402012}
Kenneth~R. Davidson.
\newblock {\em {$C^*$}-algebras by example}, volume~6 of {\em Fields Institute
  Monographs}.
\newblock American Mathematical Society, Providence, RI, 1996.

\bibitem{MR1468230}
Richard~V. Kadison and John~R. Ringrose.
\newblock {\em Fundamentals of the theory of operator algebras. {V}ol. {II}},
  volume~16 of {\em Graduate Studies in Mathematics}.
\newblock American Mathematical Society, Providence, RI, 1997.
\newblock Advanced theory, Corrected reprint of the 1986 original.

\bibitem{MR2583950}
Alexander~S. Kechris.
\newblock {\em Global aspects of ergodic group actions}, volume 160 of {\em
  Mathematical Surveys and Monographs}.
\newblock American Mathematical Society, Providence, RI, 2010.

\bibitem{MR2931406}
Alexander~S. Kechris.
\newblock Weak containment in the space of actions of a free group.
\newblock {\em Israel J. Math.}, 189:461--507, 2012.

\bibitem{MR2077037}
Alexander Lubotzky and Yehuda Shalom.
\newblock Finite representations in the unitary dual and {R}amanujan groups.
\newblock In {\em Discrete geometric analysis}, volume 347 of {\em Contemp.
  Math.}, pages 173--189. Amer. Math. Soc., Providence, RI, 2004.

\bibitem{MR3067294}
Narutaka Ozawa.
\newblock About the {C}onnes embedding conjecture: algebraic approaches.
\newblock {\em Jpn. J. Math.}, 8(1):147--183, 2013.

\end{thebibliography}

Department of Mathematics\\
The University of Texas at Austin\\
Austin, TX 78712\\
\\
\texttt{pjburton@math.utexas.edu}\\
\texttt{kate.juschenko@gmail.com}

\end{document}